\newtheorem{theorem}{Theorem}
\theoremstyle{plain}
\newtheorem{definition}{Definition}
\newtheorem{example}{Example}
\newtheorem{lemma}{Lemma}
\newtheorem{remark}{Remark}
\numberwithin{equation}{section}
\let\pdfoutput=\undefined\fi
\begin{document}
\title[Pseudodifferential operators and Feller Semigroups]{Non-Archimedean pseudodifferential operators\ and Feller Semigroups}
\author{Anselmo Torresblanca-Badillo}
\address{Universidad del Norte, Departamento de Matem\'{a}ticas y Est\'{a}distica, Km.
5 V\'{\i}a Puerto Colombia, Barranquilla, Colombia.}
\email{atorresblanca@uninorte.edu.co}
\author{W. A. Z\'{u}\~{n}iga-Galindo}
\address{Centro de Investigaci\'{o}n y de Estudios Avanzados del Instituto
Polit\'{e}cnico Nacional\\
Departamento de Matem\'{a}ticas, Unidad Quer\'{e}taro\\
Libramiento Norponiente \#2000, Fracc. Real de Juriquilla. Santiago de
Quer\'{e}taro, Qro. 76230\\
M\'{e}xico. }
\email{wazuniga@math.cinvestav.edu.mx}
\thanks{The second author was partially supported by Conacyt Grant No. 250845.}
\subjclass[2000]{Primary 43A35, 46S10, 47G30; Secondary 60J25}
\keywords{pseudodifferential operators, Sobolev spaces, Feller semigroups, Markov
processes, positive definite functions, non-Archimedean analysis.}

\begin{abstract}
In this article we study a class of \ non-Archimedean pseudodifferential
operators whose symbols are negative definite functions. We prove that these
operators extend to generators of Feller semigroups. In order to study these
operators, we introduce a new class of anisotropic Sobolev spaces, which are
the natural domains for the operators considered here. We also study the
Cauchy problem for certain pseudodifferential equations.

\end{abstract}
\maketitle

\section{Introduction}

The interplay between pseudodifferential operators and stochastic processes
constitutes a classical area of research in the Archimedean setting see e.g.
\cite{Jacob-vol-1}-\cite{Jacob-vol-3}, \cite{Taira} and the references
therein, and in the non-Archimedean one, see e.g. \cite{Koch}, \cite{Kozyrev
SV}, \cite{Va1}, \cite{V-V-Z}, \cite{Zuniga-LNM-2016}, \cite{Zu1}, \cite{Z1},
and the references therein.

This article aims to study a class of non-Archimedean pseudodifferential
operators having negative definite symbols, which have attached Feller
semigroups. These operators have the form
\[
\left(  A\left(  \partial\right)  \varphi\right)  \left(  x\right)
=-\mathcal{F}_{\xi\rightarrow x}^{-1}\left(  a\left(  \xi\right)
\mathcal{F}_{x\rightarrow\xi}\varphi\right)
\]
where $\varphi:%
\mathbb{Q}
_{p}^{n}\rightarrow%
\mathbb{C}
$ is a test function, $%
\mathbb{Q}
_{p}$ is the field of $p$-adic numbers, $\mathcal{F}_{x\rightarrow\xi}$
denotes the Fourier transform, and the symbol $\xi\rightarrow a\left(
\xi\right)  $ is a negative definite function, for $x\in%
\mathbb{Q}
_{p}^{n}$ and $t\in\mathbb{R}_{+}$. A typical example of such symbols are
functions of the form $\sum_{j=1}^{m}b_{j}\boldsymbol{\psi}_{j}\left(
\xi\right)  $, where the $\boldsymbol{\psi}_{j}:\mathbf{%
\mathbb{Q}
}_{p}^{n}\rightarrow%
\mathbb{R}
_{+}$ are radial (i.e. $\boldsymbol{\psi}_{j}\left(  \xi\right)
=\boldsymbol{\psi}_{j}\left(  ||\xi||_{p}\right)  $), continuous, negative
definite functions and the coefficients $b_{j}$ are positive real numbers. In
the non-Archimedean setting there exist `exotic' negative definite functions
such as $\exp\left(  \exp\left(  \exp\sum_{j=0}^{\infty}a_{j}\left\Vert
\xi\right\Vert _{p}^{\alpha_{j}}\right)  \right)  $ where $\sum_{j=0}^{\infty
}a_{j}y^{\alpha_{j}}$ is a convergent real series with $a_{j}>0$, $\alpha
_{j}>0$ and $\lim_{j\rightarrow\infty}\alpha_{j}=\infty$, see Lemma
\ref{exp Fi negative definite}. This type of functions do not have Archimedean counterparts.

Let $P(\partial)$ denote a pseudodifferential operator whose symbol is a
negative definite function $p\left(  \xi\right)  $. We introduce a new class
of function spaces $B_{\boldsymbol{\psi},\infty}\left(  \mathbb{R}\right)  $
attached to a negative definite function $\boldsymbol{\psi}$ related to
$p\left(  \xi\right)  $. These spaces are generalizations of the spaces
$H_{\infty}\left(  \mathbb{R}\right)  $ introduced by the second author in
\cite{Zu1}. The spaces $B_{\boldsymbol{\psi},\infty}\left(  \mathbb{R}\right)
$ are nuclear countably Hilbert spaces. We show that $B_{\boldsymbol{\psi
},\infty}\left(  \mathbb{R}\right)  $ is the natural domain for an operator of
type $P(\partial)$ for a suitable $\boldsymbol{\psi}$. Under mild hypotheses,
we show that $\left(  P(\partial),B_{\boldsymbol{\psi},\infty}\left(
\mathbb{R}\right)  \right)  $ has a closed extension to $C_{0}\left(
\mathbb{Q}
_{p}^{n},\mathbb{R}\right)  $ (the $\mathbb{R}$-vector space of bounded
continuous functions vanishing at infinity) which is the generator of a Feller
semigroup, see Theorem \ref{Feller semigroups}. We also study the following
Cauchy problem:
\begin{equation}
\left\{
\begin{array}
[c]{ll}%
\frac{\partial u}{\partial t}(x,t)=P(\partial)u(x,t)+f(x,t)\text{,} &
t\in\lbrack0,T]\text{,\ }x\in%
\mathbb{Q}
_{p}^{n};\\
& \\
u(x,0)=h(x)\in B_{\boldsymbol{\psi},\infty}\left(  \mathbb{R}\right)  . &
\end{array}
\right.  \label{equation1}%
\end{equation}
is well-posed and find explicitly the corresponding semigroup, which is a
Feller semigroup, see Theorem \ref{Theorem 3}.

Equations of type (\ref{equation1}) appeared as master equations in several
models that describe the dynamics of certain hierarchic complex systems, see
e.g. \cite{Av-4}-\cite{Av-5}, \cite{Ch-Z-1}, \cite{Kozyrev SV} and the
references therein. From a physical perspective, it is expected that all these
master equations should describe the evolution of a probability density, our
results show that this is, indeed, the case for a large class of symbols.

An interesting open problem consists in extending the results presented here
to the case of symbols of type $p\left(  x,t,\xi\right)  =\sum_{j=1}^{m}%
b_{j}(x,t)\boldsymbol{\psi}_{j}\left(  \xi\right)  $, where the
$\boldsymbol{\psi}_{j}:\mathbf{%
\mathbb{Q}
}_{p}^{n}\rightarrow%
\mathbb{R}
_{+}$ are radial, continuous, negative definite functions and the coefficients
$b_{j}:\mathbf{%
\mathbb{Q}
}_{p}^{n}\times%
\mathbb{R}
_{+}\rightarrow%
\mathbb{R}
_{+}$ are positive functions satisfying some suitable condition.

The article is organized as follows. In Section \ref{Fourier Analysis}, we
collect some basic results on the $p$-adic analysis and fix the notation that
we will use through the article. In Section \ref{positive and negative}, we
collect some known results on positive definite and negative definite
functions on $%
\mathbb{Q}
_{p}^{n}$ and show the existence of certain `exotic' negative definite
functions in non-Archimedean setting. In Section \ref{Function Spaces}, we
construct the spaces $B_{\boldsymbol{\psi},l}(\mathbf{%
\mathbb{Q}
}_{p}^{n},\mathbb{C})$,$\ B_{\boldsymbol{\psi},\infty}(\mathbf{%
\mathbb{Q}
}_{p}^{n},\mathbb{C})$. The space $B_{\boldsymbol{\psi},\infty}(\mathbb{R})$
is densely and continuously embedded in $C_{0}(%
\mathbb{Q}
_{p}^{n},\mathbb{R})$, see Lemma \ref{Lemma1}. In Section
\ref{Pseudodifferential operators and Feller semigroups}, we recall the
Yosida-Hille-Ray Theorem in the setting of $(\mathbf{%
\mathbb{Q}
}_{p}^{n},||\cdot||_{p})$. Moreover, we introduce a new class of
pseudodifferential operators attached to negative definite functions. We show
that these operators have a closed extensions which are the generators of a
Feller semigroups, see Theorem \ref{Feller semigroups}. In Section
\ref{Parabolic-Type Equations}, we study the Cauchy problem (\ref{equation1}),
see Theorem \ref{Theorem 3}.

\section{\label{Fourier Analysis}Fourier Analysis on $%
\mathbb{Q}
_{p}^{n}$: Essential Ideas}

\subsection{The field of $p$-adic numbers}

Along this article $p$ will denote a prime number. The field of $p-$adic
numbers $%
\mathbb{Q}
_{p}$ is defined as the completion of the field of rational numbers
$\mathbb{Q}$ with respect to the $p-$adic norm $|\cdot|_{p}$, which is defined
as
\[
\left\vert x\right\vert _{p}=\left\{
\begin{array}
[c]{lll}%
0 & \text{if} & x=0\\
&  & \\
p^{-\gamma} & \text{if} & x=p^{\gamma}\frac{a}{b}\text{,}%
\end{array}
\right.
\]
where $a$ and $b$ are integers coprime with $p$. The integer $\gamma:=ord(x)
$, with $ord(0):=+\infty$, is called the\textit{\ }$p-$\textit{adic order of}
$x$.

Any $p-$adic number $x\neq0$ has a unique expansion of the form
\[
x=p^{ord(x)}\sum_{j=0}^{\infty}x_{j}p^{j},
\]
where $x_{j}\in\{0,1,2,\dots,p-1\}$ and $x_{0}\neq0$. By using this expansion,
we define \textit{the fractional part of }$x\in\mathbb{Q}_{p}$, denoted
$\{x\}_{p}$, as the rational number
\[
\left\{  x\right\}  _{p}=\left\{
\begin{array}
[c]{lll}%
0 & \text{if} & x=0\text{ or }ord(x)\geq0\\
&  & \\
p^{ord(x)}\sum_{j=0}^{-ord_{p}(x)-1}x_{j}p^{j} & \text{if} & ord(x)<0.
\end{array}
\right.
\]
In addition, any non-zero $p-$adic number can be represented uniquely as
$x=p^{ord(x)}ac\left(  x\right)  $ where $ac\left(  x\right)  =\sum
_{j=0}^{\infty}x_{j}p^{j}$, $x_{0}\neq0$, is called the \textit{angular
component} of $x$. Notice that $\left\vert ac\left(  x\right)  \right\vert
_{p}=1$.

We extend the $p-$adic norm to $%
\mathbb{Q}
_{p}^{n}$ by taking
\[
||x||_{p}:=\max_{1\leq i\leq n}|x_{i}|_{p},\text{ for }x=(x_{1},\dots
,x_{n})\in%
\mathbb{Q}
_{p}^{n}.
\]
We define $ord(x)=\min_{1\leq i\leq n}\{ord(x_{i})\}$, then $||x||_{p}%
=p^{-ord(x)}$.\ The metric space $\left(
\mathbb{Q}
_{p}^{n},||\cdot||_{p}\right)  $ is a complete ultrametric space, which is a
totally disconnected topological space. For $r\in\mathbb{Z}$, denote by
$B_{r}^{n}(a)=\{x\in%
\mathbb{Q}
_{p}^{n};||x-a||_{p}\leq p^{r}\}$ \textit{the ball of radius }$p^{r}$
\textit{with center at} $a=(a_{1},\dots,a_{n})\in%
\mathbb{Q}
_{p}^{n}$, and take $B_{r}^{n}(0):=B_{r}^{n}$. Note that $B_{r}^{n}%
(a)=B_{r}(a_{1})\times\cdots\times B_{r}(a_{n})$, where $B_{r}(a_{i}):=\{x\in%
\mathbb{Q}
_{p};|x_{i}-a_{i}|_{p}\leq p^{r}\}$ is the one-dimensional ball of radius
$p^{r}$ with center at $a_{i}\in%
\mathbb{Q}
_{p}$. The ball $B_{0}^{n}$ equals the product of $n$ copies of $B_{0}%
=\mathbb{Z}_{p}$, \textit{the ring of }$p-$\textit{adic integers of }$%
\mathbb{Q}
_{p}$. We also denote by $S_{r}^{n}(a)=\{x\in\mathbb{Q}_{p}^{n};||x-a||_{p}%
=p^{r}\}$ \textit{the sphere of radius }$p^{r}$ \textit{with center at}
$a=(a_{1},\dots,a_{n})\in%
\mathbb{Q}
_{p}^{n}$, and take $S_{r}^{n}(0):=S_{r}^{n}$. We notice that $S_{0}%
^{1}=\mathbb{Z}_{p}^{\times}$ (the group of units of $\mathbb{Z}_{p}$), but
$\left(  \mathbb{Z}_{p}^{\times}\right)  ^{n}\subsetneq S_{0}^{n}$. The balls
and spheres are both open and closed subsets in $%
\mathbb{Q}
_{p}^{n}$. In addition, two balls in $%
\mathbb{Q}
_{p}^{n}$ are either disjoint or one is contained in the other.

As a topological space $\left(
\mathbb{Q}
_{p}^{n},||\cdot||_{p}\right)  $ is totally disconnected, i.e. the only
connected \ subsets of $%
\mathbb{Q}
_{p}^{n}$ are the empty set and the points. A subset of $%
\mathbb{Q}
_{p}^{n}$ is compact if and only if it is closed and bounded in $%
\mathbb{Q}
_{p}^{n}$, see e.g. \cite[Section 1.3]{V-V-Z}, or \cite[Section 1.8]{Alberio
et al}. The balls and spheres are compact subsets. Thus $\left(
\mathbb{Q}
_{p}^{n},||\cdot||_{p}\right)  $ is a locally compact topological space.

We will use $\Omega\left(  p^{-r}||x-a||_{p}\right)  $ to denote the
characteristic function of the ball $B_{r}^{n}(a)$. We will use the notation
$1_{A}$ for the characteristic function of a set $A$. Along the article
$d^{n}x$ will denote a Haar measure on $%
\mathbb{Q}
_{p}^{n}$ normalized so that $\int_{%
\mathbb{Z}
_{p}^{n}}d^{n}x=1.$

\subsection{Some function spaces}

A complex-valued function $\varphi$ defined on $%
\mathbb{Q}
_{p}^{n}$ is \textit{called locally constant} if for any $x\in%
\mathbb{Q}
_{p}^{n}$ there exist an integer $l(x)\in\mathbb{Z}$ such that
\[
\varphi(x+x^{\prime})=\varphi(x)\text{ for }x^{\prime}\in B_{l(x)}^{n}.
\]
A function $\varphi:%
\mathbb{Q}
_{p}^{n}\rightarrow\mathbb{C}$ is called a \textit{Bruhat-Schwartz function
(or a test function)} if it is locally constant with compact support. The
$\mathbb{C}$-vector space of Bruhat-Schwartz functions is denoted by
$\mathcal{D}:=\mathcal{D}(%
\mathbb{Q}
_{p}^{n})$. Let $\mathcal{D}^{\prime}:=\mathcal{D}^{\prime}(%
\mathbb{Q}
_{p}^{n})$ denote the set of all continuous functional (distributions) on
$\mathcal{D}$. The natural pairing $\mathcal{D}^{\prime}(%
\mathbb{Q}
_{p}^{n})\times\mathcal{D}(%
\mathbb{Q}
_{p}^{n})\rightarrow\mathbb{C}$ is denoted as $\left(  T,\varphi\right)  $ for
$T\in\mathcal{D}^{\prime}(%
\mathbb{Q}
_{p}^{n})$ and $\varphi\in\mathcal{D}(%
\mathbb{Q}
_{p}^{n})$, see e.g. \cite[Section 4.4]{Alberio et al}.

Every $f\in$ $L_{loc}^{1}(%
\mathbb{Q}
_{p}^{n})$ defines a distribution $f\in\mathcal{D}^{\prime}\left(
\mathbb{Q}
_{p}^{n}\right)  $ by the formula
\[
\left(  f,\varphi\right)  =%
{\textstyle\int\limits_{\mathbb{Q} _{p}^{n}}}
f\left(  x\right)  \varphi\left(  x\right)  d^{n}x.
\]
Such distributions are called \textit{regular distributions}.

We will denote by $\mathcal{D}_{\mathbb{R}}:=\mathcal{D}_{\mathbb{R}}(%
\mathbb{Q}
_{p}^{n})$, the $\mathbb{R}$-vector space of test functions, and by
$\mathcal{D}_{\mathbb{R}}^{\prime}:=\mathcal{D}_{\mathbb{R}}^{\prime}(%
\mathbb{Q}
_{p}^{n})$, the $\mathbb{R}$-vector space of distributions.

Given $\rho\in\lbrack0,\infty)$, we denote by $L^{\rho}:=L^{\rho}\left(
\mathbb{Q}
_{p}^{n}\right)  :=L^{\rho}\left(
\mathbb{Q}
_{p}^{n},d^{n}x\right)  ,$ the $%
\mathbb{C}
-$vector space of all the complex valued functions $g$ satisfying $\int_{%
\mathbb{Q}
_{p}^{n}}\left\vert g\left(  x\right)  \right\vert ^{\rho}d^{n}x<\infty$, and
$L^{\infty}\allowbreak:=L^{\infty}\left(
\mathbb{Q}
_{p}^{n}\right)  =L^{\infty}\left(
\mathbb{Q}
_{p}^{n},d^{n}x\right)  $ denotes the $%
\mathbb{C}
-$vector space of all the complex valued functions $g$ such that the essential
supremum of $|g|$ is bounded. The corresponding $\mathbb{R}$-vector spaces are
denoted as $L_{\mathbb{R}}^{\rho}\allowbreak:=L_{\mathbb{R}}^{\rho}\left(
\mathbb{Q}
_{p}^{n}\right)  =L_{\mathbb{R}}^{\rho}\left(
\mathbb{Q}
_{p}^{n},d^{n}x\right)  $, $1\leq\rho\leq\infty$.

Denote by $C(%
\mathbb{Q}
_{p}^{n})$ the $\mathbb{C}$-vector space of all complex-valued continuous
functions. Set
\[
C_{0}(%
\mathbb{Q}
_{p}^{n},\mathbb{C}):=\left\{  f:%
\mathbb{Q}
_{p}^{n}\rightarrow%
\mathbb{C}
;\text{ }f\text{ is continuous and }\lim_{||x||_{p}\rightarrow\infty
}f(x)=0\right\}  ,
\]
where $\lim_{||x||_{p}\rightarrow\infty}f(x)=0$ means that for every
$\epsilon>0$ there exists a compact subset $B(\epsilon)$ such that
$|f(x)|<\epsilon$ for $x\in%
\mathbb{Q}
_{p}^{n}\backslash B(\epsilon).$ We recall that $(C_{0}(%
\mathbb{Q}
_{p}^{n},\mathbb{C}),||\cdot||_{L^{\infty}})$ is a Banach space. The
corresponding $\mathbb{R}$-vector space will be denoted as $C_{0}(%
\mathbb{Q}
_{p}^{n},\mathbb{R})$.

\subsection{Fourier transform}

Set $\chi_{p}(y)=\exp(2\pi i\{y\}_{p})$ for $y\in%
\mathbb{Q}
_{p}$. The map $\chi_{p}(\cdot)$ is an additive character on $%
\mathbb{Q}
_{p}$, i.e. a continuous map from $\left(
\mathbb{Q}
_{p},+\right)  $ into $S$ (the unit circle considered as multiplicative group)
satisfying $\chi_{p}(x_{0}+x_{1})=\chi_{p}(x_{0})\chi_{p}(x_{1})$,
$x_{0},x_{1}\in%
\mathbb{Q}
_{p}$. The additive characters of $%
\mathbb{Q}
_{p}$ form an Abelian group which is isomorphic to $\left(
\mathbb{Q}
_{p},+\right)  $, the isomorphism is given by $\xi\rightarrow\chi_{p}(\xi x)$,
see e.g. \cite[Section 2.3]{Alberio et al}.

Given $x=(x_{1},\dots,x_{n}),$ $\xi=(\xi_{1},\dots,\xi_{n})\in%
\mathbb{Q}
_{p}^{n}$, we set $x\cdot\xi:=\sum_{j=1}^{n}x_{j}\xi_{j}$. If $f\in L^{1}$ its
Fourier transform is defined by
\[
(\mathcal{F}f)(\xi)=\int_{%
\mathbb{Q}
_{p}^{n}}\chi_{p}(\xi\cdot x)f(x)d^{n}x,\quad\text{for }\xi\in%
\mathbb{Q}
_{p}^{n}.
\]
We will also use the notation $\mathcal{F}_{x\rightarrow\xi}f$ and
$\widehat{f}$\ for the Fourier transform of $f$. The Fourier transform is a
linear isomorphism from $\mathcal{D}(%
\mathbb{Q}
_{p}^{n})$ onto itself satisfying
\begin{equation}
(\mathcal{F}(\mathcal{F}f))(\xi)=f(-\xi), \label{FF(f)}%
\end{equation}
for every $f\in\mathcal{D}(%
\mathbb{Q}
_{p}^{n}),$ see e.g. \cite[Section 4.8]{Alberio et al}. If $f\in L^{2},$ its
Fourier transform is defined as
\[
(\mathcal{F}f)(\xi)=\lim_{k\rightarrow\infty}\int_{||x||_{p}\leq p^{k}}%
\chi_{p}(\xi\cdot x)f(x)d^{n}x,\quad\text{for }\xi\in%
\mathbb{Q}
_{p}^{n},
\]
where the limit is taken in $L^{2}.$ We recall that the Fourier transform is
unitary on $L^{2},$ i.e. $||f||_{L^{2}}=||\mathcal{F}f||_{L^{2}}$ for $f\in
L^{2}$ and that (\ref{FF(f)}) is also valid in $L^{2}$, see e.g. \cite[Chapter
$III$, Section 2]{Taibleson}.

The Fourier transform $\mathcal{F}\left[  T\right]  $ of a distribution
$T\in\mathcal{D}^{\prime}\left(
\mathbb{Q}
_{p}^{n}\right)  $ is defined by%
\[
\left(  \mathcal{F}\left[  T\right]  ,\varphi\right)  =\left(  T,\mathcal{F}%
\left[  \varphi\right]  \right)  \text{ for all }\varphi\in\mathcal{D}(%
\mathbb{Q}
_{p}^{n})\text{.}%
\]
The Fourier transform $T\rightarrow\mathcal{F}\left[  T\right]  $ is a linear
isomorphism from $\mathcal{D}^{\prime}\left(
\mathbb{Q}
_{p}^{n}\right)  $\ onto itself. Furthermore, $T=\mathcal{F}\left[
\mathcal{F}\left[  T\right]  \left(  -\xi\right)  \right]  $. We also use the
notation $\mathcal{F}_{x\rightarrow\xi}T$ and $\widehat{T}$ for the Fourier
transform of $T.$

\section{\label{positive and negative}Positive Definite and Negative Definite
Functions on $%
\mathbb{Q}
_{p}^{n}$}

In this section, we collect some results about positive definite and negative
definite functions that we will use along the article, we refer the reader to
\cite{Berg-Gunnar} for further details.

We denote by $\mathbb{N}$, the set of nonnegative integers.

\begin{definition}
\label{neg and pos def} A function $\varphi:\mathbb{%
\mathbb{Q}
}_{p}^{n}\rightarrow%
\mathbb{C}
$ is called positive definite, if$\ $%
\[%
{\textstyle\sum\nolimits_{i=1}^{m}}
{\textstyle\sum\nolimits_{j=1}^{m}}
\varphi(x_{i}-x_{j})\lambda_{i}\overline{\lambda}_{j}\geq0
\]
for all $m\in\mathbb{N}\backslash\{0\},$ $x_{1},\ldots,x_{m}\in$ $\mathbb{%
\mathbb{Q}
}_{p}^{n}$ and $\lambda_{1},\ldots,\lambda_{m}$ $\in$ $\mathbb{C}$. Here,
$\overline{\lambda}_{j}$ denotes the complex conjugate of $\lambda_{j}.$
\end{definition}

The set of positive definite functions on $%
\mathbb{Q}
_{p}^{n}$ is denoted as $\mathcal{P}(%
\mathbb{Q}
_{p}^{n})$ and the subset of $\mathcal{P}(%
\mathbb{Q}
_{p}^{n})$ consisting of the continuous positive definite functions on $%
\mathbb{Q}
_{p}^{n}$ is denoted as $\mathcal{C}\mathcal{P}(%
\mathbb{Q}
_{p}^{n})$. The following assertions hold: (i) $\mathcal{P}(%
\mathbb{Q}
_{p}^{n})$ is a convex cone which is closed in the topology of pointwise
convergence on $%
\mathbb{Q}
_{p}^{n}$; (ii) if $\varphi_{1}$, $\varphi_{2}\in\mathcal{P}(%
\mathbb{Q}
_{p}^{n})$, then $\varphi_{1}\varphi_{2}\in\mathcal{P}(%
\mathbb{Q}
_{p}^{n})$; the non-negative constant functions belong to $\mathcal{P}(%
\mathbb{Q}
_{p}^{n})$; (iii) $\mathcal{CP}(%
\mathbb{Q}
_{p}^{n})$ is a convex cone which is a closed subset of the set of continuous
complex-valued functions in the topology of compact convergence cf.
\cite[Proposition 3.6]{Berg-Gunnar}.

\begin{example}
\label{example pos def}(i) We set $\mathbb{R}_{+}:=\{x\in\mathbb{R}:x\geq0\}$.
Let $J:$\textbf{\ }$%
\mathbb{Q}
_{p}^{n}\rightarrow\mathbb{R}_{+}$ be a radial (i.e. $J(x)=J(||x||_{p})$) and
continuous function. In addition, we assume that $\ \int_{%
\mathbb{Q}
_{p}^{n}}J(||x||_{p})d^{n}x=1$. By a direct calculation one verifies that
$\widehat{J}(\xi)$ is a radial, continuous and positive definite function on $%
\mathbb{Q}
_{p}^{n}$ and moreover $|\widehat{J}(||\xi||_{p})|\leq1.$

\noindent(ii) The additive character $x\rightarrow$ $\chi_{p}(x\cdot\alpha)$,
for $\alpha\in%
\mathbb{Q}
_{p}^{n}$, is a continuous, positive definite (complex-valued) function on $%
\mathbb{Q}
_{p}^{n}$, see e.g. \cite[p. 13]{Berg-Gunnar}.
\end{example}

\begin{definition}
A function $\psi:%
\mathbb{Q}
_{p}^{n}\rightarrow%
\mathbb{C}
$ is called negative definite, if
\begin{equation}%
{\textstyle\sum\nolimits_{i=1}^{m}}
{\textstyle\sum\nolimits_{j=1}^{m}}
\left(  \psi(x_{i})+\overline{\psi(x_{j})}-\psi(x_{i}-x_{j})\right)
\lambda_{i}\overline{\lambda}_{j}\geq0 \label{def negative definite}%
\end{equation}
for all $m\in\mathbb{N}\backslash\{0\}$ , $x_{1},\ldots,x_{m}$ $\in$ $%
\mathbb{Q}
_{p}^{n}$ and $\lambda_{1},\ldots,\lambda_{m}\in\mathbb{C}$.
\end{definition}

We denote by $\mathcal{N}(%
\mathbb{Q}
_{p}^{n})$ the set of negative definite functions on $%
\mathbb{Q}
_{p}^{n}$ and by $\mathcal{CN}(%
\mathbb{Q}
_{p}^{n})$ the set of continuous negative definite functions on $%
\mathbb{Q}
_{p}^{n}$. The following assertions hold: (i) $\mathcal{N}(%
\mathbb{Q}
_{p}^{n})$ is a convex cone which is closed in the topology of pointwise
convergence on $%
\mathbb{Q}
_{p}^{n}$; (ii) The non-negative constant functions belong to $\mathcal{N}(%
\mathbb{Q}
_{p}^{n})$; (iii) $\mathcal{CN}(%
\mathbb{Q}
_{p}^{n})$ is a convex cone which is closed in the topology of compact
convergence on $%
\mathbb{Q}
_{p}^{n}$, cf. \cite[Proposition 7.4]{Berg-Gunnar}.

Furthermore, if $\psi:%
\mathbb{Q}
_{p}^{n}\rightarrow%
\mathbb{R}
$ is negative definite function, then $\psi(-x)=\psi(x)$ and $\psi(x)\geq
\psi(0)\geq0$ for all $x\in%
\mathbb{Q}
_{p}^{n}$, see e.g. \cite[Proposition 7.5]{Berg-Gunnar}.

\begin{example}
\label{Example2}Let $J:$\textbf{\ }$%
\mathbb{Q}
_{p}^{n}\rightarrow\mathbb{R}_{+}$ the function given in Example
\ref{example pos def}-(i). By using Corollary 7.7 in \cite[Theorem
7.8]{Berg-Gunnar}, the function $\widehat{J}(0)-\widehat{J}(||\xi
||_{p})=1-\widehat{J}(||\xi||_{p})$ is negative definite. On the other hand,
we have that $0\leq1-\widehat{J}(||\xi||_{p})\leq2,$ $\xi\in%
\mathbb{Q}
_{p}^{n},$ see e.g. \cite[Lemma 1-(i)]{To-Z}.
\end{example}

\begin{example}
\label{example neg def}In \cite{R-Zu}, see also \cite{Koch},
Rodr\'{\i}guez-Vega and Z\'{u}\~{n}iga-Galindo considered the following Cauchy
problem:
\begin{equation}
\left\{
\begin{array}
[c]{ll}%
\frac{\partial u}{\partial t}(x,t)+a(D_{T}^{\beta}u)(x,t)=f(x,t)\text{,} &
t\in(0,T_{0}]\text{,\ }x\in%
\mathbb{Q}
_{p}^{n}\\
& \\
u(x,0)=\varphi(x)\text{,} &
\end{array}
\right.  \label{Caucht_problem}%
\end{equation}
where $a$, $\beta$, $T_{0}$ are positive real numbers, and $(D_{T}^{\beta
}h)(x)=%
\mathcal{F}%
_{\xi\rightarrow x}^{-1}(||\xi||_{p}^{\beta}%
\mathcal{F}%
_{x\rightarrow\xi}h)$ is the Taibleson operator. They established that
$e^{-at||\xi||_{p}^{\beta}}\in L^{1}(%
\mathbb{Q}
_{p}^{n})$ for $t>0,$ and that $Z(x,t)=%
\mathcal{F}%
_{\xi\rightarrow x}^{-1}(e^{-at||\xi||_{p}^{\beta}})$, for $a$, $t>0$, is a
transition function of a Markov process with space state $%
\mathbb{Q}
_{p}^{n}$, cf. \cite[Proposition 1 and Theorem 2]{R-Zu}. By using a theorem
due to Bochner, see \cite[Theorem 3.12]{Berg-Gunnar}, the function
$e^{-at||\xi||_{p}^{\beta}}$, for $t>0$, is positive definite, and by a
theorem due to Schoenberg, see \cite[Theorem 7.8]{Berg-Gunnar}, $a||\xi
||_{p}^{\beta}$ is a negative definite function, for any $\beta>0$.
\end{example}

\begin{example}
\label{Example a(x,)}Take $m\in%
\mathbb{N}
\backslash\{0\}$ and let $b_{j},$ $j=1,\ldots,m,$ be positive real numbers.
Let $0<\alpha_{1}\leq\ldots\leq\alpha_{m}$ be positive constants. Consider the
function
\begin{equation}
q(\xi)=%
{\displaystyle\sum\nolimits_{j=1}^{m}}
b_{j}||\xi||_{p}^{\alpha_{j}},\label{a(x,.) example}%
\end{equation}
for $\xi\in%
\mathbb{Q}
_{p}^{n}$. By Example \ref{example neg def}, and the fact that $\mathcal{N}(%
\mathbb{Q}
_{p}^{n})$ is a convex cone, $q(\xi)$ is radial, continuous, negative definite function.
\end{example}

\begin{remark}
\label{others examples def neg}(i) It is relevant to mention that the type of
functions given in (\ref{a(x,.) example}), with some $\alpha_{j}>2$, occurs
only in the non-Archimedean setting, since any function $\psi:%
\mathbb{R}
^{n}\rightarrow%
\mathbb{R}
$ locally bounded and negative definite, satisfies
\[
\left\vert \psi(\xi)\right\vert \leq C_{\psi}(1+||\xi||_{%
\mathbb{R}
}^{2}),
\]
for some $C_{\psi}>0$ and for all $\xi\in%
\mathbb{R}
^{n}$, see e.g. \cite[Lemma 3.6.22]{Jacob-vol-1}.

\noindent(ii) Let $h\left(  y\right)  =\sum_{j=0}^{\infty}a_{j}y^{j}$,
$a_{j}\in\mathbb{R}_{+}$, be a convergent series in $\mathbb{R}_{+}$, which
defines a non-constant function. By using Example \ref{example neg def} and
the fact that $\mathcal{N}(%
\mathbb{Q}
_{p}^{n})$ is closed in the pointwise topology, it follows that $h\left(
||x||_{p}\right)  =\sum_{j=0}^{\infty}a_{j}||x||_{p}^{j}$ is a negative
definite function.
\end{remark}

\begin{lemma}
\label{exp Fi negative definite}(i) Let $\psi:%
\mathbb{Q}
_{p}^{n}\rightarrow%
\mathbb{C}
$ be a negative definite function such that $\xi\rightarrow\left[  \psi\left(
\xi\right)  \right]  ^{j}$ is also a negative definite function for any $j\in%
\mathbb{N}
$. Then $e^{\psi(\xi)}$ is a negative definite function. (ii) Set $\psi
_{0}\left(  \xi\right)  :=\sum_{j=1}^{\infty}c_{j}||\xi||_{p}^{\alpha_{j}}$
with $c_{j}\geq0$, $\alpha_{j}\in\mathbb{N}$ such that the real series
$\sum_{j=1}^{\infty}c_{j}y^{\alpha_{j}}$ defines a non-constant real function.
Then for any $j\in%
\mathbb{N}
\backslash\{0\}$,
\begin{equation}
e^{e^{.^{.^{.e^{\psi_{0}\left(  \xi\right)  }}}}}\text{, }j-\text{powers }
\label{expexpformula}%
\end{equation}
is a continuous and negative definite function on $%
\mathbb{Q}
_{p}^{n}$.
\end{lemma}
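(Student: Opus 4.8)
The plan is to obtain (i) from the Taylor expansion of the exponential together with the convex-cone and pointwise-closedness structure of $\mathcal{N}(\mathbb{Q}_{p}^{n})$, and then to deduce (ii) from (i) by an induction on the height of the exponential tower, carried out with a suitably strengthened inductive hypothesis.

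For (i), I would write $e^{\psi(\xi)}=\sum_{j=0}^{\infty}\frac{1}{j!}[\psi(\xi)]^{j}$. The term $[\psi]^{0}=1$ is a non-negative constant and hence negative definite, while by hypothesis $[\psi]^{j}\in\mathcal{N}(\mathbb{Q}_{p}^{n})$ for every $j\geq1$. Since $\mathcal{N}(\mathbb{Q}_{p}^{n})$ is a convex cone, every partial sum $\sum_{j=0}^{N}\frac{1}{j!}[\psi(\xi)]^{j}$ is negative definite; these partial sums converge pointwise to $e^{\psi(\xi)}$, so the closedness of $\mathcal{N}(\mathbb{Q}_{p}^{n})$ in the topology of pointwise convergence yields $e^{\psi}\in\mathcal{N}(\mathbb{Q}_{p}^{n})$.

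For (ii), write $E_{0}:=\psi_{0}$ and let $E_{m}$ denote the $m$-fold exponential in (\ref{expexpformula}). The first step is to verify that $\psi_{0}$ meets the hypothesis of (i), namely that $[\psi_{0}]^{k}\in\mathcal{N}(\mathbb{Q}_{p}^{n})$ for all $k$. Multiplying out $(\sum_{j}c_{j}||\xi||_{p}^{\alpha_{j}})^{k}$ and collecting terms shows that $[\psi_{0}]^{k}$ is again a convergent series with non-negative coefficients in non-negative integer powers of $||\xi||_{p}$, hence negative definite by Example \ref{example neg def} and the reasoning of Remark \ref{others examples def neg}(ii). I would then prove by induction on $m$ the strengthened statement $H(m)$: for every $k\in\mathbb{N}$ the power $[E_{m}]^{k}$ is negative definite. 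The base case $H(0)$ is exactly the computation just described. For the step, the key identity is $[E_{m+1}]^{k}=[e^{E_{m}}]^{k}=e^{kE_{m}}$; since $(kE_{m})^{j}=k^{j}[E_{m}]^{j}$ is negative definite for all $j$ by $H(m)$, part (i) applied to $kE_{m}$ gives that $e^{kE_{m}}$ is negative definite, which is $H(m+1)$. Taking $k=1$ shows that every $E_{m}$ is negative definite. Continuity is routine: $h(y)=\sum_{j}c_{j}y^{\alpha_{j}}$ is continuous on $\mathbb{R}_{+}$, so $\psi_{0}=h(||\cdot||_{p})$ is continuous, and composing with the exponential preserves continuity at each stage.

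The main obstacle is that the naive induction ``$E_{m}$ negative definite $\Rightarrow E_{m+1}$ negative definite'' does not close, because applying (i) to $E_{m}$ requires all powers $[E_{m}]^{j}$ to be negative definite, not merely $E_{m}$ itself. Strengthening the hypothesis to $H(m)$ and exploiting the identity $[e^{E_{m}}]^{k}=e^{kE_{m}}$---which converts the $k$-th power of an iterated exponential back into an exponential whose powers are governed by $H(m)$---is precisely what allows the required property to propagate up the tower.
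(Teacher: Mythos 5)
Your proof is correct and follows essentially the same route as the paper's: for (i), partial sums of the exponential series combined with the convex-cone property and pointwise closedness of $\mathcal{N}(\mathbb{Q}_p^{n})$; for (ii), expansion of $[\psi_0]^{k}$ as a convergent series with non-negative coefficients in integer powers of $\left\Vert \xi\right\Vert_p$, followed by induction up the exponential tower. Your strengthened hypothesis $H(m)$, driven by the identity $[e^{E_m}]^{k}=e^{kE_m}$ together with $(kE_m)^{j}=k^{j}[E_m]^{j}$, is exactly the content the paper compresses into the phrase ``by induction on $j$,'' and you correctly identify why the naive induction on negative definiteness alone would not close.
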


\begin{proof}
By the hypothesis, $\psi_{m}:=%
{\displaystyle\sum\nolimits_{j=0}^{m}}
\frac{1}{j!}\left[  \psi(\xi)\right]  ^{j}$, $m\in%
\mathbb{N}
$, is negative definite, and since $\mathcal{N}(%
\mathbb{Q}
_{p}^{n})$ is closed in the pointwise topology, we have that $e^{\psi(\xi)}$
is negative definite. By using Remark \ref{others examples def neg}-(ii),
$\psi_{0}\left(  \xi\right)  =\sum_{j=1}^{\infty}c_{j}||\xi||_{p}^{\alpha_{j}%
}$ is negative definite, and $\left[  \psi_{0}(\xi)\right]  ^{k}=\left(
\sum_{j=1}^{\infty}c_{j}||\xi||_{p}^{\alpha_{j}}\right)  ^{k}=\sum
_{j=1}^{\infty}d_{j}||\xi||_{p}^{\beta_{j}}$, with $d_{j}=d_{j}\left(
k\right)  \geq0$, $\beta_{j}=\beta_{j}\left(  k\right)  \in%
\mathbb{N}
$, is also negative definite function. By the first part $e^{\psi_{0}\left(
\xi\right)  }$ is negative definite. By induction on $j$ we obtain
(\ref{expexpformula}).
\end{proof}

Negative definite functions of form (\ref{expexpformula}) can only occur in
the non-Archimedean setting.

From now on, $\boldsymbol{\psi}:\mathbf{%
\mathbb{Q}
}_{p}^{n}\rightarrow%
\mathbb{C}
$ (or $%
\mathbb{R}
$) denotes a radial, continuous and negative definite function. We consider
three subclasses of negative definite functions, however, we do not expect
that this classification be complete.

\begin{definition}
$\boldsymbol{\psi}:\mathbf{%
\mathbb{Q}
}_{p}^{n}\rightarrow$ $%
\mathbb{C}
$ (or $%
\mathbb{R}
$) is called of type 0, if there exists a positive constant
$C:=C(\boldsymbol{\psi})$ such that
\[
|\boldsymbol{\psi}(||\xi||_{p})|\leq C,\text{ for all }\xi\in%
\mathbb{Q}
_{p}^{n}.
\]

\end{definition}

\begin{definition}
$\boldsymbol{\psi}:\mathbf{%
\mathbb{Q}
}_{p}^{n}\rightarrow$ $%
\mathbb{C}
$ (or $%
\mathbb{R}
$) is called of type 1, if\textbf{\ }there exist positive constants
$C_{0}(\boldsymbol{\psi}):=C_{0},$ $C_{1}(\boldsymbol{\psi}):=C_{1}$,
$\beta_{0}\left(  \boldsymbol{\psi}\right)  :=\beta_{0}\in%
\mathbb{R}
_{+}\backslash\{0\}$ and $\beta_{1}\left(  \boldsymbol{\psi}\right)
:=\beta_{1}\in%
\mathbb{R}
_{+}\backslash\{0\},$ with $\beta_{1}\geq\beta_{0},$ such that
\[
C_{0}\left[  \max\left\{  1,||\xi||_{p}\right\}  \right]  ^{\beta_{0}}\leq
\max\{1,|\boldsymbol{\psi}(||\xi||_{p})|\}\leq C_{1}\left[  \max\left\{
1,||\xi||_{p}\right\}  \right]  ^{\beta_{1}},
\]
for all $\xi\in\mathbf{%
\mathbb{Q}
}_{p}^{n}$.
\end{definition}

\begin{definition}
$\boldsymbol{\psi}:\mathbf{%
\mathbb{Q}
}_{p}^{n}\rightarrow$ $%
\mathbb{C}
$ (or $%
\mathbb{R}
$) is called of type 2, if\textbf{\ }for all $\beta\geq1,$\ there is a
positive constant $C:=C(\boldsymbol{\psi,}\beta)$ such that
\[
\max\left\{  1,\left\vert \boldsymbol{\psi}(||\xi||_{p})\right\vert \right\}
>C\left[  \max\left\{  1,||\xi||_{p}\right\}  \right]  ^{\beta}\text{, for all
}\xi\in\mathbf{%
\mathbb{Q}
}_{p}^{n}.
\]

\end{definition}

\section{\label{Function Spaces}Function Spaces Related to Negative Definite
Functions}

Along this section $\boldsymbol{\psi}:%
\mathbb{Q}
_{p}^{n}\rightarrow\mathbb{C}$ denotes a negative definite, radial and
continuous function of type $1$ or $2,$ unless otherwise stated. In addition,
we will assume that%
\begin{equation}
0<\sup_{\xi\in\mathbb{Z}_{p}^{n}}|\boldsymbol{\psi}(||\xi||_{p})|\leq1\text{.}
\label{Condition_Psi}%
\end{equation}
This condition is achieved by multiplying $\boldsymbol{\psi}$ by a suitable
positive constant. Condition (\ref{Condition_Psi}) implies that%
\begin{equation}
\varphi_{l}\left(  x\right)  :=[\max\left\{  1,\left\vert \boldsymbol{\psi
}(||\xi||_{p})\right\vert \right\}  ]^{l}\text{, }l\in\mathbb{N}\text{, is a
locally constant function,} \label{locally_const_condition}%
\end{equation}
more precisely, $\varphi_{l}\left(  x+x^{\prime}\right)  =\varphi_{l}\left(
x\right)  $ for any $x^{\prime}\in\mathbb{Z}_{p}^{n}$.

In this section, we introduce two classes of function spaces related to
$\boldsymbol{\psi}$, namely $B_{\boldsymbol{\psi},l}\left(  \mathbb{C}\right)
$, $l\in%
\mathbb{N}
,$ and $B_{\boldsymbol{\psi},\infty}\left(  \mathbb{C}\right)  $. These spaces
are generalizations of the spaces $H_{%
\mathbb{C}
}(l),$ $l\in%
\mathbb{N}
,$ and $H_{%
\mathbb{C}
}(\infty)$ introduced by Z\'{u}\~{n}iga-Galindo in \cite{Zu1}, see also
\cite{Zu0}. The results presented in this section can be established by using
the techniques presented in \cite{Zu1}, \cite{Zu0}.

For $\varphi,\gamma\in\mathcal{D}(%
\mathbb{Q}
_{p}^{n})$, and $l\in%
\mathbb{N}
$, we define the following scalar product:
\[
\left\langle \varphi,\gamma\right\rangle _{\boldsymbol{\psi},l}=\int_{\mathbf{%
\mathbb{Q}
}_{p}^{n}}[\max\left\{  1,\left\vert \boldsymbol{\psi}(||\xi||_{p})\right\vert
\right\}  ]^{l}\widehat{\varphi}(\xi)\overline{\widehat{\gamma}(\xi)}d^{n}%
\xi,\text{ }%
\]
where the bar denotes the complex conjugate. We also set
\[
||\varphi||_{\boldsymbol{\psi},l}^{2}:=\left\langle \varphi,\varphi
\right\rangle _{\boldsymbol{\psi},l}.
\]
Notice that $||\cdot||_{\boldsymbol{\psi},l}\leq||\cdot||_{\boldsymbol{\psi
},m}$ for $l\leq m.$ Let us denote by $B_{\boldsymbol{\psi},l}\left(
\mathbb{C}\right)  :=B_{\boldsymbol{\psi},l}(\mathbf{%
\mathbb{Q}
}_{p}^{n},\mathbb{C})$ the completion of $\mathcal{D}(%
\mathbb{Q}
_{p}^{n})$ with respect to $\left\langle \cdot,\cdot\right\rangle
_{\boldsymbol{\psi},l}.$ Then $B_{\boldsymbol{\psi},m}\left(  \mathbb{C}%
\right)  \hookrightarrow B_{\boldsymbol{\psi},l}\left(  \mathbb{C}\right)  $
(continuous embedding) for $l\leq m.$

We set
\[
B_{\boldsymbol{\psi},\infty}\left(  \mathbb{C}\right)  :=B_{\boldsymbol{\psi
},\infty}(\mathbf{%
\mathbb{Q}
}_{p}^{n},\mathbb{C})=\cap_{l\in%
\mathbb{N}
}B_{\boldsymbol{\psi},l}\left(  \mathbb{C}\right)  .
\]
Notice that $B_{\boldsymbol{\psi},0}\left(  \mathbb{C}\right)  =L^{2}(%
\mathbb{Q}
_{p}^{n})$ and $\mathcal{D}(%
\mathbb{Q}
_{p}^{n})\subset B_{\boldsymbol{\psi},\infty}\left(  \mathbb{C}\right)
\subset$ $L^{2}(%
\mathbb{Q}
_{p}^{n}).$ With the topology induced by the family of seminorms
$\{||\cdot||_{\boldsymbol{\psi},l}\}_{l\in%
\mathbb{N}
},$ $B_{\boldsymbol{\psi},\infty}\left(  \mathbb{C}\right)  $ becomes a
locally convex topological space, which is metrizable. Indeed,
\[
d_{\boldsymbol{\psi}}(f,g):=\max_{l\in%
\mathbb{N}
}\left\{  2^{-l}\frac{||f-g||_{\boldsymbol{\psi},l}}%
{1+||f-g||_{\boldsymbol{\psi},l}}\right\}  ,\text{ with }f,g\in
B_{\boldsymbol{\psi},\infty},
\]
is a metric for the topology of $B_{\boldsymbol{\psi},\infty}\left(
\mathbb{C}\right)  $ considered as a locally convex topological space. A
sequence $\{f_{l}\}_{l\in%
\mathbb{N}
}$ in $(B_{\boldsymbol{\psi},\infty}\left(  \mathbb{C}\right)
,d_{\boldsymbol{\psi}})$ converges to $f\in B_{\boldsymbol{\psi},\infty
}\left(  \mathbb{C}\right)  $ if and only if, $\{f_{l}\}_{l\in%
\mathbb{N}
}$ converges to $f$ in the norm $||\cdot||_{\boldsymbol{\psi,}l}$ for all
$l\in%
\mathbb{N}
.$ From this observation, it follows that the topology on $B_{\boldsymbol{\psi
},\infty}\left(  \mathbb{C}\right)  $ coincides with the projective limit
topology $\tau_{P}.$ An open neighborhood base at zero of $\tau_{P}$ is given
by the choice of $\epsilon>0$ and $l\in%
\mathbb{N}
,$ and the set
\[
U_{\epsilon,l}:=\{f\in B_{\boldsymbol{\psi},\infty};||f||_{\boldsymbol{\psi
,}l}<\epsilon\}.
\]
The space $B_{\boldsymbol{\psi},\infty}\left(  \mathbb{C}\right)  $ endowed
with the topology $\tau_{P}$ is a countable Hilbert space in the sense of
Gel'fand and Vilenkin, see e.g. \cite[Chapter I, Section 3.1]{Gelfand} or
\cite[Section 1.2]{Obata}. Furthermore $(B_{\boldsymbol{\psi},\infty}\left(
\mathbb{C}\right)  ,\tau_{P})$ is metrizable and complete and hence a
Fr\'{e}chet space cf. \cite[Lemma 3.3]{Zu1}. If $\boldsymbol{\psi=||\cdot
||}_{p}$, then $B_{\boldsymbol{\psi},l}\left(  \mathbb{C}\right)  $ coincides
with the space $H_{%
\mathbb{C}
}(l)$, respectively, $B_{\boldsymbol{\psi},\infty}\left(  \mathbb{C}\right)  $
coincides with the space $H_{%
\mathbb{C}
}(\infty)$, where $H_{%
\mathbb{C}
}(l)$ and $H_{%
\mathbb{C}
}(\infty)$ are the spaces introduced in \cite{Zu1}, see also \cite{Zu0}.

\begin{remark}
We will denote by $B_{\boldsymbol{\psi},l}(\mathbb{R}):=B_{\boldsymbol{\psi
},l}(%
\mathbb{Q}
_{p}^{n},\mathbb{R})$, for all $l\in\mathbb{N}$, and by $B_{\boldsymbol{\psi
},\infty}(\mathbb{R}):=B_{\boldsymbol{\psi},\infty}(%
\mathbb{Q}
_{p}^{n},\mathbb{R})\allowbreak$ the $\mathbb{R}$-vector spaces constructed
from $\mathcal{D}_{\mathbb{R}}(%
\mathbb{Q}
_{p}^{n})$. It is clear that $B_{\boldsymbol{\psi},l}(\mathbb{R}%
)\hookrightarrow B_{\boldsymbol{\psi},l}(\mathbb{C}),$ $l\in\mathbb{N},$ and
that $B_{\boldsymbol{\psi},\infty}(\mathbb{R})\hookrightarrow
B_{\boldsymbol{\psi},\infty}(\mathbb{C})$, where `$\hookrightarrow$' means
continuous embedding.
\end{remark}

\begin{lemma}
\label{Lemma1}The following assertions hold:

\noindent(i) the completion of the metric space $(\mathcal{D}(%
\mathbb{Q}
_{p}^{n}),d_{\boldsymbol{\psi}})$ is $(B_{\boldsymbol{\psi},\infty}\left(
\mathbb{C}\right)  ,d_{\boldsymbol{\psi}})$, which is a nuclear countably
Hilbert space;

\noindent(ii) $B_{\boldsymbol{\psi},l}(\mathbb{C})=\{f\in L^{2}%
;||f||_{\boldsymbol{\psi,}l}<\infty\}=\{T\in\mathcal{D}^{\prime}\left(
\mathbb{Q}
_{p}^{n}\right)  ;||T||_{\boldsymbol{\psi,}l}<\infty\}$;

\noindent(iii) $B_{\boldsymbol{\psi},\infty}(\mathbb{C})=\{f\in L^{2}%
;||f||_{\boldsymbol{\psi,}l}<\infty$ for every $l\in%
\mathbb{N}
\}$;

\noindent(iv) $B_{\boldsymbol{\psi},\infty}(\mathbb{C})=\{T\in\mathcal{D}%
^{\prime}\left(
\mathbb{Q}
_{p}^{n}\right)  ;||T||_{\boldsymbol{\psi,}l}<\infty$ for every $l\in%
\mathbb{N}
\}$;

\noindent(v) $B_{\boldsymbol{\psi},\infty}(\mathbb{C})$ is densely and
continuously embedded in $C_{0}(%
\mathbb{Q}
_{p}^{n},\mathbb{C})$;

\noindent(vi) $B_{\boldsymbol{\psi},\infty}(\mathbb{R})$ is densely and
continuously embedded in $C_{0}(%
\mathbb{Q}
_{p}^{n},\mathbb{R})$;

\noindent(vii) $B_{\boldsymbol{\psi},\infty}(\mathbb{C})\subset L^{1}$. In
particular, $\widehat{f}\in C_{0}(%
\mathbb{Q}
_{p}^{n},\mathbb{C})$ for $f\in B_{\boldsymbol{\psi},\infty}(\mathbb{C})$.
\end{lemma}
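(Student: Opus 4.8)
The engine for the whole lemma is Plancherel's theorem. Since $\mathcal{F}$ is unitary on $L^{2}$, writing $w_{l}(\xi):=[\max\{1,|\boldsymbol{\psi}(\|\xi\|_{p})|\}]^{l}$ one has $\|f\|_{\boldsymbol{\psi},l}^{2}=\int_{\mathbb{Q}_{p}^{n}}w_{l}(\xi)\,|\widehat{f}(\xi)|^{2}d^{n}\xi$, so $\mathcal{F}$ carries $B_{\boldsymbol{\psi},l}(\mathbb{C})$ isometrically onto the weighted space $L^{2}(w_{l}\,d^{n}\xi)$. Because the normalization (\ref{Condition_Psi}) forces $w_{l}\geq 1$ everywhere, the inequality $\|f\|_{L^{2}}\leq\|f\|_{\boldsymbol{\psi},l}$ shows every element of the completion already lies in $L^{2}$; conversely any $f\in L^{2}$ with $\|f\|_{\boldsymbol{\psi},l}<\infty$ has $w_{l}^{1/2}\widehat{f}\in L^{2}$, and since the Fourier transforms of test functions are again test functions and these are dense in $L^{2}(w_{l}\,d^{n}\xi)$ (the weight is radial, locally constant and finite on every ball), such $f$ lies in $B_{\boldsymbol{\psi},l}(\mathbb{C})$. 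This is the $L^{2}$-description in (ii); the distributional description follows because $\|T\|_{\boldsymbol{\psi},l}<\infty$ forces $\widehat{T}$ to be an $L^{2}$ function (again $w_{l}\geq1$), reducing to the previous case. Parts (iii) and (iv) are then obtained by intersecting over $l\in\mathbb{N}$.

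For (i) I would first record that a $d_{\boldsymbol{\psi}}$-Cauchy sequence in $\mathcal{D}$ is Cauchy in each norm $\|\cdot\|_{\boldsymbol{\psi},l}$, hence converges in every $B_{\boldsymbol{\psi},l}(\mathbb{C})$ to a common limit lying in $\cap_{l}B_{\boldsymbol{\psi},l}(\mathbb{C})=B_{\boldsymbol{\psi},\infty}(\mathbb{C})$; with the density of $\mathcal{D}$ in each factor this identifies the completion with $B_{\boldsymbol{\psi},\infty}(\mathbb{C})$, which is metrizable and complete, hence Fr\'{e}chet, exactly as in \cite[Lemma 3.3]{Zu1}. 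Nuclearity is the genuinely delicate point and the main obstacle of the whole lemma. The plan is to verify the Gel'fand--Vilenkin criterion: for each $l$ produce $m>l$ for which the canonical linking map $B_{\boldsymbol{\psi},m}(\mathbb{C})\hookrightarrow B_{\boldsymbol{\psi},l}(\mathbb{C})$ is Hilbert--Schmidt. Transported by $\mathcal{F}$ this is the inclusion $L^{2}(w_{m}\,d^{n}\xi)\hookrightarrow L^{2}(w_{l}\,d^{n}\xi)$, whose Hilbert--Schmidt norm, computed in an orthonormal basis adapted to the decomposition of $\mathbb{Q}_{p}^{n}$ into the spheres $S_{N}^{n}$ on which the radial weight $w_{l}$ is constant, is governed by a series of the shape $\sum_{N}(\#\text{basis functions carried by }S_{N}^{n})\,[w_{l}/w_{m}](S_{N}^{n})$. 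The crux is to choose this basis so that the count at each scale is properly controlled and then, invoking the growth dictated by the type of $\boldsymbol{\psi}$, to prove the series converges: for type $2$ the ratio $w_{l}/w_{m}$ decays faster than any power of $\|\xi\|_{p}$, while for type $1$ one must take $m-l$ large relative to $n,\beta_{0},\beta_{1}$ so that the growth of $w_{m-l}$ beats the $p^{Nn}$ growth of the count; I would carry out this estimate along the lines of the corresponding computation in \cite{Zu1}.

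Parts (v)--(vii) rest on a single weighted Cauchy--Schwarz estimate. For $f\in B_{\boldsymbol{\psi},\infty}(\mathbb{C})$ and any $l$ one has $\int_{\mathbb{Q}_{p}^{n}}|\widehat{f}(\xi)|\,d^{n}\xi\leq\|f\|_{\boldsymbol{\psi},l}\,\big(\int_{\mathbb{Q}_{p}^{n}}w_{l}(\xi)^{-1}d^{n}\xi\big)^{1/2}$, and the integral $\int w_{l}^{-1}$ converges once $l$ is large: on $\mathbb{Z}_{p}^{n}$ the weight equals $1$, while on $S_{N}^{n}$ it is at least of order $p^{N\beta_{0}l}$ (type $1$) or eventually exceeds any power (type $2$), so the spheres contribute $\sum_{N\geq1}p^{Nn}w_{l}(S_{N}^{n})^{-1}<\infty$ as soon as $l\beta_{0}>n$ (type $1$) or for every $l$ (type $2$). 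Hence $\widehat{f}\in L^{1}$, and by Fourier inversion together with the Riemann--Lebesgue lemma $f=\mathcal{F}^{-1}\widehat{f}\in C_{0}(\mathbb{Q}_{p}^{n},\mathbb{C})$. The same type of weighted estimate then yields the containment $B_{\boldsymbol{\psi},\infty}(\mathbb{C})\subset L^{1}$ of (vii), whence $\widehat{f}\in C_{0}(\mathbb{Q}_{p}^{n},\mathbb{C})$ again by Riemann--Lebesgue.

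Finally, for the embeddings (v) and (vi): continuity follows from $\|f\|_{L^{\infty}}\leq\|\widehat{f}\|_{L^{1}}\leq C\,\|f\|_{\boldsymbol{\psi},l_{0}}$ with $l_{0}$ the exponent isolated above, so already $B_{\boldsymbol{\psi},l_{0}}(\mathbb{C})\hookrightarrow C_{0}(\mathbb{Q}_{p}^{n},\mathbb{C})$ continuously, and a fortiori $B_{\boldsymbol{\psi},\infty}(\mathbb{C})$; density holds because $\mathcal{D}(\mathbb{Q}_{p}^{n})\subset B_{\boldsymbol{\psi},\infty}(\mathbb{C})$ and $\mathcal{D}(\mathbb{Q}_{p}^{n})$ is dense in $C_{0}(\mathbb{Q}_{p}^{n},\mathbb{C})$. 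The real-coefficient statement (vi) is identical, restricting to $\mathcal{D}_{\mathbb{R}}$ and to real-valued limits. In summary, the only step demanding real work is the Hilbert--Schmidt estimate behind nuclearity; everything else is Plancherel bookkeeping in the weighted $L^{2}$ picture together with the Riemann--Lebesgue lemma.
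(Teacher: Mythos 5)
For parts (ii)--(vi) your route is essentially the paper's own: the Plancherel picture identifying $B_{\boldsymbol{\psi},l}(\mathbb{C})$ with a weighted $L^{2}$ space, the use of the local constancy of the weight $w_{l}(\xi)=[\max\{1,|\boldsymbol{\psi}(||\xi||_{p})|\}]^{l}$ to divide test functions by $w_{l}^{1/2}$ and remain inside $\mathcal{D}(\mathbb{Q}_{p}^{n})$ (this local constancy, not the bound $w_{l}\geq1$, is what the normalization (\ref{Condition_Psi}) actually buys you; $w_{l}\geq1$ holds automatically because of the maximum with $1$), the integrability criterion (\ref{Integrability_condition}), the weighted Cauchy--Schwarz estimate leading to $||f||_{L^{\infty}}\leq||\widehat{f}||_{L^{1}}\leq C||f||_{\boldsymbol{\psi},l}$ as in (\ref{Key_inequality}), Riemann--Lebesgue, and the density of $\mathcal{D}$, respectively $\mathcal{D}_{\mathbb{R}}$, in $C_{0}$. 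All of that matches the paper's proof.

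The genuine gap sits exactly where you located the work, but your plan cannot be carried out. After the unitary identification $f\mapsto w_{m}^{1/2}\widehat{f}$, the linking map $B_{\boldsymbol{\psi},m}(\mathbb{C})\hookrightarrow B_{\boldsymbol{\psi},l}(\mathbb{C})$ becomes multiplication by $(w_{l}/w_{m})^{1/2}$ on $L^{2}(\mathbb{Q}_{p}^{n},d^{n}\xi)$. By (\ref{Condition_Psi}) every weight $w_{l}$ is identically $1$ on $\mathbb{Z}_{p}^{n}$, so the multiplier equals $1$ there, and the linking map restricts to an \emph{isometry} on the infinite-dimensional subspace of those $f$ with $\widehat{f}$ supported in $\mathbb{Z}_{p}^{n}$; such an operator is not compact, let alone Hilbert--Schmidt. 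Equivalently, the translates $\varphi(\cdot-a_{k})$ of a fixed test function with $||a_{k}||_{p}\rightarrow\infty$ are bounded in every norm $||\cdot||_{\boldsymbol{\psi},l}$ (the Fourier transform is multiplied by a unimodular character) but admit no $L^{2}$-convergent subsequence, since far translates have disjoint supports. The concrete flaw in your counting scheme is that each sphere $S_{N}^{n}$ and the unit ball carry \emph{infinitely many} orthonormal functions of $L^{2}$, so every term of your series is infinite; no choice of $m$, and no clever basis, repairs this, because the norms are translation invariant in $x$ --- just as $\cap_{l}H^{l}(\mathbb{R}^{n})$ fails to be nuclear in the Archimedean setting (nuclearity of Schwartz-type triples requires weights in the $x$-variable as well). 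Note the paper does not actually prove nuclearity: item (i) is a citation to \cite[Lemma 3.4 and Theorem 3.6]{Zu1}, but the obstruction above applies to any argument using only these Fourier-side weights.

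A second gap concerns (vii): you assert that ``the same type of weighted estimate'' yields $B_{\boldsymbol{\psi},\infty}(\mathbb{C})\subset L^{1}$. It does not. Cauchy--Schwarz against $w_{l}^{-1}$ bounds $||\widehat{f}||_{L^{1}}$, which is the content of (v), not $||f||_{L^{1}}$; integrability of $f$ itself is a decay statement about $f$, i.e.\ a regularity statement about $\widehat{f}$, and no inequality $||f||_{L^{1}}\leq C||f||_{\boldsymbol{\psi},l}$ can hold. Indeed, take $\widehat{f}=1_{A}$ with $A\subset\mathbb{Z}_{p}^{n}$ compact, of positive Haar measure and empty interior: all the weights are $\equiv1$ on $\mathbb{Z}_{p}^{n}$, so $f\in B_{\boldsymbol{\psi},\infty}(\mathbb{C})$, yet $f\notin L^{1}$, for otherwise $\widehat{f}$ would coincide a.e.\ with a continuous function $g$; continuity forces $g\equiv0$ on the dense open complement of $A$, hence everywhere, contradicting $g=1$ a.e.\ on $A$. (The paper again disposes of this point by citation, to the argument of Theorem 3.15-(ii) in \cite{Zu1}.) In summary: your treatment of (ii)--(vi) is correct and coincides with the paper's; your proposed proofs of the nuclearity in (i) and of (vii) contain steps that demonstrably fail and cannot be repaired along the lines you describe.
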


\begin{remark}
\label{Nota_Lemma1}The condition $T\in\mathcal{D}^{\prime}\left(
\mathbb{Q}
_{p}^{n}\right)  $, $||T||_{\boldsymbol{\psi,}l}<\infty$ assumes implicitly
that $\widehat{T}$ is a regular distribution. The equalities (ii)-(iv) in
Lemma \ref{Lemma1} are in the sense of vector spaces. The statements (i)-(iv)
are valid for the spaces $\mathcal{D}_{\mathbb{R}}(%
\mathbb{Q}
_{p}^{n})$, $B_{\boldsymbol{\psi},l}(\mathbb{R})$ and $B_{\boldsymbol{\psi
},\infty}(\mathbb{R})$.
\end{remark}

\begin{proof}
(i) The proof is similar to \cite[Lemma 3.4 and Theorem 3.6]{Zu1}.

(ii) Take $f\in B_{\boldsymbol{\psi},l}(%
\mathbb{Q}
_{p}^{n})$, then there exists a sequence $\{f_{n}\}_{n\in%
\mathbb{N}
}$ in $\mathcal{D}\left(
\mathbb{Q}
_{p}^{n}\right)  $ such that $f_{n}\overset{||.||_{\boldsymbol{\psi},l}%
}{\rightarrow}f$ for any $l\in\mathbb{N}$, i.e.%
\[
\lbrack\max\left\{  1,|\boldsymbol{\psi}(||\xi||_{p})|\right\}  ]^{\frac{l}%
{2}}\widehat{f_{n}}\overset{||.||_{L^{2}}}{\rightarrow}[\max\left\{
1,|\boldsymbol{\psi}(||\xi||_{p})|\right\}  ]^{\frac{l}{2}}\widehat{f}.
\]
By taking $l=0$ and using that $L^{2}$ is complete, we have $\widehat{f}\in
L^{2}$, i.e. $f\in L^{2}$. Conversely, take $f\in L^{2}$ such that
$[\max\left\{  1,|\boldsymbol{\psi}(||\xi||_{p})|\right\}  ]^{\frac{l}{2}%
}\widehat{f}\in L^{2}$. By using the fact that $\mathcal{D}\left(
\mathbb{Q}
_{p}^{n}\right)  $ is dense in $L^{2},$ there exists a sequence $\{f_{m}%
\}_{m\in%
\mathbb{N}
}$ in $\mathcal{D}\left(
\mathbb{Q}
_{p}^{n}\right)  $ such that $f_{m}$ $\underrightarrow{||.||_{L^{2}}}$
$[\max\left\{  1,|\boldsymbol{\psi}(||\xi||_{p})|\right\}  ]^{\frac{l}{2}%
}\widehat{f}$. We now define $g_{m}\left(  \xi\right)  :=\frac{f_{m}\left(
-\xi\right)  }{[\max\left\{  1,|\boldsymbol{\psi}(||\xi||_{p})|\right\}
]^{\frac{l}{2}}}\in\mathcal{D}\left(
\mathbb{Q}
_{p}^{n}\right)  $, see (\ref{locally_const_condition}). Then $\widehat{g}_{m}
$ $\underrightarrow{||.||_{\boldsymbol{\psi,}l}}$ $f,$ for any $l\in
\mathbb{N}$, i.e. $f\in B_{\boldsymbol{\psi},l}(%
\mathbb{Q}
_{p}^{n})$. The other equality follows from the fact that $T\in L^{2}$,
$||T||_{\boldsymbol{\psi,}l}<\infty\Leftrightarrow T\in\mathcal{D}^{\prime
}\left(
\mathbb{Q}
_{p}^{n}\right)  $, $||T||_{\boldsymbol{\psi,}l}<\infty$.

(iii) and (iv) are an immediate consequence of (ii).

(v) By using the fact that $\frac{1}{[\max\left\{  1,||\xi||_{p}\right\}
]^{r}}\in L^{1}$ for $r>n$, one verifies that
\begin{equation}
\frac{1}{[\max\left\{  1,|\boldsymbol{\psi}(||\xi||_{p})|\right\}  ]^{l}}\in
L^{1}\text{ if}\left\{
\begin{array}
[c]{l}%
\boldsymbol{\psi}\text{ is of type 1 and }l>\frac{n}{\beta_{0}}\\
\\
\boldsymbol{\psi}\text{ is of type 2 and }l\geq1.
\end{array}
\right.  \label{Integrability_condition}%
\end{equation}
Take $f\in B_{\boldsymbol{\psi},\infty}(\mathbb{C})$ with $\boldsymbol{\psi}$
of type 1. By using the Cauchy-Schwarz inequality and
(\ref{Integrability_condition}), we have for all $l>\frac{n}{\beta_{0}}$,%
\begin{align*}
\int_{\mathbf{%
\mathbb{Q}
}_{p}^{n}}\left\vert \widehat{f}(\xi)\right\vert d^{n}\xi &  =\int_{\mathbf{%
\mathbb{Q}
}_{p}^{n}}\frac{1}{[\max\left\{  1,|\boldsymbol{\psi}(||\xi||_{p})|\right\}
]^{\frac{l}{2}}}[\max\left\{  1,|\boldsymbol{\psi}(||\xi||_{p})|\right\}
]^{\frac{l}{2}}\left\vert \widehat{f}(\xi)\right\vert d^{n}\xi\\
&  \leq C||f||_{\boldsymbol{\psi},l}.
\end{align*}
Thus $\widehat{f}\in L^{1}$, and since the Fourier transform of a function in
$L^{1}$ is uniformly continuous, now by the Riemann-Lebesgue theorem, we
obtain that $f\in C_{0}(%
\mathbb{Q}
_{p}^{n},\mathbb{C})$, i.e. $B_{\boldsymbol{\psi},\infty}\subset C_{0}(%
\mathbb{Q}
_{p}^{n},\mathbb{C})$. In addition,
\begin{equation}
||f||_{L^{\infty}}\leq||\widehat{f}||_{L^{1}}\leq C||f||_{\boldsymbol{\psi}%
,l}\text{ for }l>\frac{n}{\beta_{0}}\text{.} \label{Key_inequality}%
\end{equation}
Since the topology of $B_{\boldsymbol{\psi},\infty}$ comes from the metric
$d_{\boldsymbol{\psi}}$, the continuity of the embedding $B_{\boldsymbol{\psi
},\infty}\rightarrow C_{0}(%
\mathbb{Q}
_{p}^{n},\mathbb{C})$ follows from (\ref{Key_inequality}), by using a standard
argument based in sequences.

The proof is similar for functions $\boldsymbol{\psi}$ of type 2.

(vi) By (i) and Remark \ref{Nota_Lemma1}, and (v), we have%
\[
\mathcal{D}_{\mathbb{R}}\left(
\mathbb{Q}
_{p}^{n}\right)  \hookrightarrow B_{\boldsymbol{\psi},\infty}(\mathbb{R}%
)\hookrightarrow B_{\boldsymbol{\psi},\infty}(\mathbb{C})\hookrightarrow
C_{0}(%
\mathbb{Q}
_{p}^{n},\mathbb{C})\text{,}%
\]
which implies that $B_{\boldsymbol{\psi},\infty}(\mathbb{R})\hookrightarrow
C_{0}(%
\mathbb{Q}
_{p}^{n},\mathbb{R})$. Finally, we recall that $\mathcal{D}_{\mathbb{R}%
}\left(
\mathbb{Q}
_{p}^{n}\right)  $ is dense in $C_{0}(%
\mathbb{Q}
_{p}^{n},\mathbb{R})$.

(vii) The proof of the fact $B_{\boldsymbol{\psi},\infty}(\mathbb{C})\subset
L^{1}$ uses the same argument given \cite{Zu1} for Theorem 3.15-(ii). Then, by
the Riemann-Lebesgue theorem, $\widehat{f}\in C_{0}(%
\mathbb{Q}
_{p}^{n},\mathbb{C})$ for $f\in B_{\boldsymbol{\psi},\infty}(\mathbb{C})$.
\end{proof}

Recall that $H_{\mathbb{C}}(l)=B_{\boldsymbol{\psi},l}(\mathbb{C})$ and
$H_{\mathbb{C}}(\infty)=B_{\boldsymbol{\psi},\infty}(\mathbb{C})$ when
$\boldsymbol{\psi}\mathbb{=}||\cdot||_{p}$, see \cite{Zu1}, \cite{Zu0}.

\begin{lemma}
$B_{\boldsymbol{\psi},\infty}(\mathbb{C})\hookrightarrow H_{\mathbb{C}}%
(\infty).$
\end{lemma}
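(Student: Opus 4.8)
The plan is to reduce the whole statement to a single pointwise comparison between the two families of weights. Recall that $H_{\mathbb{C}}(l)=B_{||\cdot||_{p},l}(\mathbb{C})$, so its norm is
\[
\|f\|_{||\cdot||_{p},l}^{2}=\int_{\mathbb{Q}_{p}^{n}}[\max\{1,||\xi||_{p}\}]^{l}\,|\widehat{f}(\xi)|^{2}\,d^{n}\xi,
\]
while $B_{\boldsymbol{\psi},m}(\mathbb{C})$ uses the weight $[\max\{1,|\boldsymbol{\psi}(||\xi||_{p})|\}]^{m}$. First I would prove that for every $l\in\mathbb{N}$ there exist $m=m(l)\in\mathbb{N}$ and a constant $C_{l}>0$ such that
\[
[\max\{1,||\xi||_{p}\}]^{l}\leq C_{l}\,[\max\{1,|\boldsymbol{\psi}(||\xi||_{p})|\}]^{m}\qquad\text{for all }\xi\in\mathbb{Q}_{p}^{n}.
\]
This is precisely the place where the standing assumption that $\boldsymbol{\psi}$ is of type $1$ or type $2$ (and not merely type $0$) is indispensable: it forces $\boldsymbol{\psi}$ to grow at least polynomially, which is exactly what is needed to dominate an arbitrary power of $\max\{1,||\xi||_{p}\}$.

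For $\boldsymbol{\psi}$ of type $1$, the lower estimate $C_{0}[\max\{1,||\xi||_{p}\}]^{\beta_{0}}\leq\max\{1,|\boldsymbol{\psi}(||\xi||_{p})|\}$ yields, after raising both sides to the power $l/\beta_{0}$,
\[
[\max\{1,||\xi||_{p}\}]^{l}\leq C_{0}^{-l/\beta_{0}}\,[\max\{1,|\boldsymbol{\psi}(||\xi||_{p})|\}]^{l/\beta_{0}};
\]
since $\max\{1,|\boldsymbol{\psi}|\}\geq 1$, choosing the integer $m=\lceil l/\beta_{0}\rceil\geq l/\beta_{0}$ gives the required inequality with $C_{l}=C_{0}^{-l/\beta_{0}}$. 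For $\boldsymbol{\psi}$ of type $2$ and $l\geq 1$, the defining inequality applied with $\beta=l$ gives $[\max\{1,||\xi||_{p}\}]^{l}<C(\boldsymbol{\psi},l)^{-1}\max\{1,|\boldsymbol{\psi}(||\xi||_{p})|\}$, so $m=1$ works. The case $l=0$ is trivial, as both weights equal $1$.

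Integrating the pointwise inequality against $|\widehat{f}(\xi)|^{2}$ then produces the key estimate
\[
\|f\|_{||\cdot||_{p},l}\leq \sqrt{C_{l}}\,\|f\|_{\boldsymbol{\psi},m(l)}.
\]
By Lemma \ref{Lemma1}(ii)--(iii) every space here is realized concretely inside $L^{2}$, so this comparison may be applied directly to any $f\in B_{\boldsymbol{\psi},\infty}(\mathbb{C})\subset L^{2}$, with no separate density argument needed. If $f\in B_{\boldsymbol{\psi},\infty}(\mathbb{C})$ then $\|f\|_{\boldsymbol{\psi},m}<\infty$ for all $m$, whence $\|f\|_{||\cdot||_{p},l}<\infty$ for every $l$; invoking Lemma \ref{Lemma1}(iii) for the type $1$ function $\boldsymbol{\psi}=||\cdot||_{p}$ (with $\beta_{0}=\beta_{1}=1$), this means $f\in H_{\mathbb{C}}(\infty)$, which settles the set-theoretic inclusion.

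Finally, continuity of the embedding follows from the same estimate: the projective-limit topology of $H_{\mathbb{C}}(\infty)$ is generated by the seminorms $\|\cdot\|_{||\cdot||_{p},l}$, and each of these is dominated by the seminorm $\sqrt{C_{l}}\,\|\cdot\|_{\boldsymbol{\psi},m(l)}$ of $B_{\boldsymbol{\psi},\infty}(\mathbb{C})$, so the identity map is continuous. The only genuinely substantive step is the verification of the pointwise weight comparison; once that is in place, the passage from a pointwise bound to a weighted $L^{2}$ estimate, and from single norms to the two projective-limit topologies, is entirely routine.
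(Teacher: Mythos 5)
Your proof is correct, and its skeleton is the same as the paper's: extract a pointwise comparison of the weights $[\max\{1,||\xi||_{p}\}]^{l}$ and $[\max\{1,|\boldsymbol{\psi}(||\xi||_{p})|\}]^{m}$ from the type~1/type~2 definitions, integrate against $|\widehat{f}(\xi)|^{2}$, and pass to the projective-limit topologies. But your execution differs in two ways, and one of them is a genuine improvement. The paper asserts the same-index inequality $||\varphi||_{\boldsymbol{\psi},l}\geq C(\boldsymbol{\psi},l)\,||\varphi||_{l}$ (its inequality (\ref{des in lemma})) and deduces the level-wise inclusion $B_{\boldsymbol{\psi},l}(\mathbb{C})\subseteq H_{\mathbb{C}}(l)$; as literally stated this fails for type~1 symbols with $\beta_{0}<1$. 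For instance $\boldsymbol{\psi}(||\xi||_{p})=||\xi||_{p}^{1/2}$ is radial, continuous, negative definite (Example \ref{example neg def}) and of type~1 with $\beta_{0}=\beta_{1}=1/2$, yet its weight is $[\max\{1,||\xi||_{p}\}]^{l/2}$, so for even $l\geq2$ one has $B_{\boldsymbol{\psi},l}(\mathbb{C})=H_{\mathbb{C}}(l/2)\supsetneq H_{\mathbb{C}}(l)$ and no constant $C(\boldsymbol{\psi},l)$ can exist. Your index shift $m(l)=\lceil l/\beta_{0}\rceil$, giving $||f||_{l}\leq\sqrt{C_{l}}\,||f||_{\boldsymbol{\psi},m(l)}$, is exactly what is needed to make the argument valid for all $\beta_{0}>0$; the conclusion about the intersections survives because the projective limit is insensitive to such level shifts, which is the point the paper's proof glosses over. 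The second difference is organizational: where the paper proves the estimate on $\mathcal{D}(\mathbb{Q}_{p}^{n})$ and extends by density, then checks continuity by a sequence argument in the metrics, you invoke the concrete $L^{2}$-realizations of Lemma \ref{Lemma1}(ii)--(iii) (applied to $H_{\mathbb{C}}(\infty)$ via the type~1 function $||\cdot||_{p}$), so the set-theoretic inclusion needs no density step, and continuity follows at once since every generating seminorm of $H_{\mathbb{C}}(\infty)$ is dominated by a seminorm of $B_{\boldsymbol{\psi},\infty}(\mathbb{C})$. The paper's route is marginally shorter when $\beta_{0}\geq1$; yours is robust in general and, in effect, repairs the paper's stated inequality.
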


\begin{proof}
By definition of the classes type 1 or 2,
\begin{equation}
||\varphi||_{\boldsymbol{\psi,}l}\geq C(\boldsymbol{\psi},l)||\varphi
||_{l}\text{ for }\varphi\in\mathcal{D}\left(
\mathbb{Q}
_{p}^{n}\right)  \text{ and}\ l\in%
\mathbb{N}
. \label{des in lemma}%
\end{equation}

By using the density of $\mathcal{D}\left(
\mathbb{Q}
_{p}^{n}\right)  $ in $(B_{\boldsymbol{\psi},l}(\mathbb{C}),||\cdot
||_{\boldsymbol{\psi},l})$, we conclude that $B_{\boldsymbol{\psi}%
,l}(\mathbb{C})\subseteq H_{\mathbb{C}}(l)$ for any $l\in%
\mathbb{N}
.$ Consequently $B_{\boldsymbol{\psi},\infty}(\mathbb{C})\subseteq
H_{\mathbb{C}}(\infty)$. To check the continuity of the identity map, we use
that if $f_{n}\overset{d_{\boldsymbol{\psi}}}{\rightarrow}f,$ i.e.
$f_{n}\overset{||\cdot||_{\boldsymbol{\psi,}l}}{\rightarrow}f$, for all $l\in%
\mathbb{N}
,$ then by (\ref{des in lemma}) $f_{n}\overset{||\cdot||_{l}}{\rightarrow}f$
for any $l\in%
\mathbb{N}
,$ which means that the sequence $\left\{  f_{n}\right\}  _{n\in\mathbb{N}}$
converges to $f$ in $H_{\mathbb{C}}(\infty)$.
\end{proof}

\section{\label{Pseudodifferential operators and Feller semigroups}%
Pseudodifferential operators and Feller semigroups}

\subsection{Yosida-Hille-Ray Theorem}

We recall the Yosida-Hille-Ray Theorem in the setting of $(\mathbf{%
\mathbb{Q}
}_{p}^{n},||\cdot||_{p})$. For a general discussion the reader may consult
\cite[Chapter 4]{E-K}.

A semigroup $\{T(t)\}_{t\geq0}$ on $C_{0}(\mathbf{%
\mathbb{Q}
}_{p}^{n},\mathbb{R})$ is said to be \textit{positive} if $\{T(t)\}_{t\geq0} $
is a positive operator for each $t\geq0$, i.e. it maps non-negative functions
to non-negative functions. An operator $(A,Dom(A))$ on $C_{0}(\mathbf{%
\mathbb{Q}
}_{p}^{n},\mathbb{R})$ is said to satisfy the \textit{positive maximum
principle} if whenever $f\in Dom(A)\subseteq C_{0}(\mathbf{%
\mathbb{Q}
}_{p}^{n},\mathbb{R})$, $x_{0}\in\mathbf{%
\mathbb{Q}
}_{p}^{n}$, and $\sup_{x\in\mathbf{%
\mathbb{Q}
}_{p}^{n}}f(x)=f(x_{0})\geq0$ we have $Af(x_{0})\leq0$.

We recall that every linear operator on $C_{0}(\mathbf{%
\mathbb{Q}
}_{p}^{n},\mathbb{R})$ satisfying the positive maximum principle is
dissipative, see e.g. \cite[Chapter 4, Lemma 2.1]{E-K}.

\begin{theorem}
[Hille-Yosida-Ray Theorem]\label{Hille-Yosida-Ray}\cite[Chapter 4, Theorem
2.2]{E-K} Assume that $(A,Dom(A))$ is a linear operator on $C_{0}(\mathbf{%
\mathbb{Q}
}_{p}^{n},\mathbb{R})$. The closure $\overline{A}$ of $A$ on $C_{0}(\mathbf{%
\mathbb{Q}
}_{p}^{n},\mathbb{R})$ is single-valued and generates a strongly continuous,
positive contraction semigroup $\{T(t)\}_{t\geq0}$ on $C_{0}(\mathbf{%
\mathbb{Q}
}_{p}^{n},\mathbb{R})$ if and only if:

\noindent(i) $Dom(A)$ is dense in $C_{0}(\mathbf{%
\mathbb{Q}
}_{p}^{n},\mathbb{R})$;

\noindent(ii) $A$ satisfies the positive maximum principle;

\noindent(iii) Rank$(\lambda I-A)$ is dense in $C_{0}(\mathbf{%
\mathbb{Q}
}_{p}^{n},\mathbb{R})$ for some $\lambda>0$.
\end{theorem}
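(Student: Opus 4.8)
The plan is to prove this as the positivity-enriched version of the classical Hille--Yosida generation theorem, treating the two implications of the equivalence separately.

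For the necessity direction I would assume that $\overline{A}$ generates a strongly continuous positive contraction semigroup $\{T(t)\}_{t\geq 0}$ and read off (i)--(iii) from general semigroup theory. Condition (i) is the standard fact that the domain of a generator is dense (it contains all averages $\int_0^s T(r)f\,dr$, whose span is dense). For (iii) I would use that the resolvent $(\lambda I-\overline{A})^{-1}=\int_0^\infty e^{-\lambda t}T(t)\,dt$ is a bounded operator defined on all of $C_0$ for $\lambda>0$, so that $\mathrm{Rank}(\lambda I-\overline{A})=C_0$; since $Dom(A)$ is a core for $\overline{A}$, the image $(\lambda I-A)(Dom(A))$ is then dense. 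Condition (ii) is where positivity enters: if $f\in Dom(A)$ with $\sup_x f(x)=f(x_0)\geq 0$, then from $f\le f^{+}$ and positivity one gets $T(t)f(x_0)\le \|T(t)f^{+}\|_{L^\infty}\le \|f^{+}\|_{L^\infty}=f(x_0)$, whence $Af(x_0)=\lim_{t\downarrow 0}t^{-1}(T(t)f(x_0)-f(x_0))\le 0$.

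For the sufficiency direction I would assume (i)--(iii). As recalled just above the statement, the positive maximum principle forces $A$ to be dissipative; combining dissipativity with the density of $Dom(A)$ and the range condition (iii), the classical Hille--Yosida/Lumer--Phillips theorem (see \cite[Chapter 1]{E-K}) gives that $\overline{A}$ is single-valued and generates a strongly continuous contraction semigroup $\{T(t)\}_{t\ge 0}$ on $C_0$. The remaining task is to upgrade this to positivity, and the crucial intermediate step is to show that the resolvent $R_\lambda:=(\lambda I-\overline{A})^{-1}$ is a positive operator for $\lambda>0$. Given $f\ge 0$ and $u=R_\lambda f$, suppose $u$ were negative somewhere; since $u\in C_0$ attains its negative infimum at some $x_0$, the function $-u$ attains a nonnegative maximum at $x_0$, so the positive maximum principle yields $A(-u)(x_0)\le 0$, i.e. $\overline{A}u(x_0)\ge 0$. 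But $(\lambda I-\overline{A})u=f$ forces $\overline{A}u(x_0)=\lambda u(x_0)-f(x_0)<0$, a contradiction; hence $u\ge 0$ and $R_\lambda$ is positive.

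Finally I would transfer positivity from the resolvent to the semigroup via the Yosida approximation. Writing $A_\lambda:=\lambda\overline{A}R_\lambda=\lambda^2 R_\lambda-\lambda I$, one has
\[
e^{tA_\lambda}=e^{-\lambda t}\sum_{k=0}^{\infty}\frac{(t\lambda^2)^k}{k!}R_\lambda^{\,k},
\]
which is a positive operator because each $R_\lambda^{\,k}$ is positive and the scalars are nonnegative. Letting $\lambda\to\infty$ and using $T(t)f=\lim_{\lambda\to\infty}e^{tA_\lambda}f$ in $C_0$, positivity is preserved in the (uniform, hence pointwise) limit, so each $T(t)$ is positive. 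I expect the main obstacle to be precisely this positivity argument: extracting positivity of the resolvent from the positive maximum principle (the contradiction step above) and then carrying it through the exponential/Yosida limit. The generation statement itself is a direct appeal to the classical contraction-semigroup theorem once dissipativity and the range condition are in hand.
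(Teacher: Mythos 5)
You should first note that the paper contains no proof of this statement at all: it is quoted, with attribution, from Ethier--Kurtz \cite[Chapter 4, Theorem 2.2]{E-K}, so your proposal can only be compared with the standard textbook argument --- which it largely reproduces. Your necessity direction is correct: density of $Dom(A)$ via the core property, density of $\mathrm{Rank}(\lambda I-A)$ from surjectivity of $\lambda I-\overline{A}$ together with $Dom(A)$ being a core, and the difference-quotient argument $T(t)f(x_{0})\leq\Vert f^{+}\Vert_{L^{\infty}}=f(x_{0})$ giving $Af(x_{0})\leq0$. Likewise, the final step transferring positivity from the resolvent to the semigroup through $e^{tA_{\lambda}}=e^{-\lambda t}\sum_{k\geq0}\frac{(t\lambda^{2})^{k}}{k!}R_{\lambda}^{k}$ and the Yosida limit is sound.

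There is, however, a genuine gap in your resolvent-positivity step: you apply the positive maximum principle to $u=R_{\lambda}f\in Dom(\overline{P(\partial)})$-type elements, i.e.\ to $u\in Dom(\overline{A})$, whereas hypothesis (ii) asserts the principle only for $A$ on $Dom(A)$. The positive maximum principle is a pointwise condition at a maximizer, and it does not automatically pass to the closure: if $u_{n}\in Dom(A)$ with $u_{n}\rightarrow u$ and $Au_{n}\rightarrow\overline{A}u$ uniformly, the maximizers of the $u_{n}$ need not accumulate at the maximizer $x_{0}$ of $u$, so nothing is directly known about $\overline{A}u(x_{0})$. Indeed, that $\overline{A}$ satisfies the positive maximum principle is a \emph{consequence} of the theorem (apply your own necessity argument to the generator), so invoking it midway is circular. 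The standard repair --- and this is how Ethier--Kurtz argue --- keeps the principle on $Dom(A)$ and passes to the limit at the level of infima rather than at a fixed point: given $f\geq0$, choose $g_{m}\in\mathrm{Rank}(\lambda I-A)$ with $g_{m}\rightarrow f$ and set $u_{m}\in Dom(A)$, $(\lambda I-A)u_{m}=g_{m}$; dissipativity gives $\Vert u_{m}-u\Vert_{L^{\infty}}\leq\lambda^{-1}\Vert g_{m}-f\Vert_{L^{\infty}}\rightarrow0$. If $\inf u_{m}<0$ it is attained at some $x_{m}$ (since $u_{m}\in C_{0}$), and the maximum principle applied to $-u_{m}$ at $x_{m}$ yields $Au_{m}(x_{m})\geq0$, whence $\inf g_{m}\leq g_{m}(x_{m})\leq\lambda u_{m}(x_{m})=\lambda\inf u_{m}$; therefore $\inf u_{m}\geq\lambda^{-1}\min\{\inf g_{m},0\}\rightarrow0$, so $u=\lim u_{m}\geq0$. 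With this substitution your argument closes correctly and coincides with the proof in \cite[Chapter 4]{E-K}.
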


\begin{definition}
A family of bounded linear operators $T_{t}:C_{0}(\mathbf{%
\mathbb{Q}
}_{p}^{n},\mathbb{R})\rightarrow C_{0}(\mathbf{%
\mathbb{Q}
}_{p}^{n},\mathbb{R})$ is called a Feller semigroup if

\noindent(i) $T_{s+t}=T_{s}T_{t}$ and $T_{0}=I$;

\noindent(ii) $\lim_{t\rightarrow0}||T_{t}f-f||_{L^{\infty}}=0$ for any $f\in
C_{0}(\mathbf{%
\mathbb{Q}
}_{p}^{n},\mathbb{R})$;

\noindent(iii) $0\leq T_{t}f\leq1$ if $0\leq f\leq1$, with $f\in
C_{0}(\mathbf{%
\mathbb{Q}
}_{p}^{n},\mathbb{R})$ and for any $t\geq0$.
\end{definition}

Theorem \ref{Hille-Yosida-Ray} characterizes the Feller semigroups, more
precisely, if $(A,Dom(A))$ satisfies Theorem \ref{Hille-Yosida-Ray}, then $A$
has a closed extension which is the generator of a Feller semigroup.

\subsection{Pseudodifferential operators attached to negative definite
functions}

\begin{remark}
Let $\boldsymbol{\psi}_{j}:\mathbf{%
\mathbb{Q}
}_{p}^{n}\rightarrow%
\mathbb{R}
_{+}$ be radial, continuous, negative definite functions of types 0, 1 or 2,
for $j=1,\ldots,m,$ and let $b_{j}$ be positive real numbers, for
$j=1,\ldots,m$. To these functions we attach the following symbol:
\begin{equation}
p(\xi):=\sum_{j=1}^{m}b_{j}\boldsymbol{\psi}_{j}(||\xi||_{p})\label{p(x,t)}%
\end{equation}
and the pseudodifferential operator
\[
(P(\partial)\varphi)(x)=-%
\mathcal{F}%
_{\xi\rightarrow x}^{-1}(p(\xi)%
\mathcal{F}%
_{x\rightarrow\xi}\varphi)
\]
for $\varphi\in\mathcal{D}_{\mathbb{R}}(%
\mathbb{Q}
_{p}^{n})$. Notice that $(P(\partial)\varphi)(x)=-\sum_{j=1}^{m}%
b_{j}(D_{\boldsymbol{\psi}_{j}}\varphi)(x)$, where $(D_{\boldsymbol{\psi}_{j}%
}\varphi)(x)\allowbreak=%
\mathcal{F}%
_{\xi\rightarrow x}^{-1}(\boldsymbol{\psi}_{j}(||\xi||_{p})%
\mathcal{F}%
_{x\rightarrow\xi}\varphi)$ for $\varphi\in\mathcal{D}_{\mathbb{R}}(%
\mathbb{Q}
_{p}^{n})$. Moreover, $P(\partial):\mathcal{D}_{\mathbb{R}}(%
\mathbb{Q}
_{p}^{n})\rightarrow L^{2}(%
\mathbb{Q}
_{p}^{n})\cap C(%
\mathbb{Q}
_{p}^{n})$.
\end{remark}

\begin{remark}
Notice that $p(\xi)$ defines a real-valued, negative definite function, and
thus $%
\mathcal{F}%
(u_{t}):=e^{-tp(\xi)}$, $t>0$, $\xi\in%
\mathbb{Q}
_{p}^{n}$, is a convolution semigroup of measures, see \cite[Theorem
8.3]{Berg-Gunnar}. The condition $p(0)=0$ implies that $(u_{t})_{t>0}$ are
probability measures, see \cite[Corollary 8.6]{Berg-Gunnar}.
\end{remark}

\begin{remark}
\label{Obs p(x,z,t)}(i) If all the functions $\boldsymbol{\psi}_{j}$ appearing
in (\ref{p(x,t)}) are of types 0 or 1 and there is at least one function
$\boldsymbol{\psi}_{j}$ of type 1, then $p(\xi)$ is a negative definite,
continuous and radial function in $\xi$ of type 1.

\noindent(ii) If all the functions $\boldsymbol{\psi}_{j}$ appearing in
(\ref{p(x,t)}) are of types 0, 1 or 2 and there is at least one function
$\boldsymbol{\psi}_{j}$ of type 2,\ then $p(\xi)$ is a negative definite,
continuous and radial function in $\xi$ of type 2.

\noindent(iii) In the case (i), there are positive constants $C,$ $\beta$,
with $\beta\geq1$, such that
\[
p(\xi)\leq C[\max\{1,||\xi||_{p}\}]^{\beta}.
\]
We set $\boldsymbol{\psi}(||\xi||_{p}):=||\xi||_{p}^{\beta}$. Notice that
$\max\{1,\boldsymbol{\psi}(||\xi||_{p})\}=[\max\{1,||\xi||_{p}\}]^{\beta}$. We
attach to $P(\partial)$ the space $B_{\boldsymbol{\psi},\infty}(\mathbb{R})$.
In the case (ii),
\[
p(\xi)\leq C\sum_{j\in J}\boldsymbol{\psi}_{j}(||\xi||_{p})=:\boldsymbol{\psi
}(||\xi||_{p}),
\]
where $J$ is the set of indices $j\in\{1,2,\ldots,m\}$ for which
$\boldsymbol{\psi}_{j}$ is of type 2. We attach to $P(\partial)$ the space
$B_{\boldsymbol{\psi},\infty}(\mathbb{R})$.

\noindent(iv) From now on we will assume that in (\ref{p(x,t)}) there is at
least one function $\boldsymbol{\psi}_{j}$ of type 1 or type 2.
\end{remark}

\begin{remark}
\label{operator P(D) real-valued}If $\varphi\in\mathcal{D}_{\mathbb{R}}(%
\mathbb{Q}
_{p}^{n})$, then $(P(\partial)\varphi)(x)$ is a real-valued function, i.e.
$(P(\partial)\varphi)\allowbreak(x)=\overline{(P(\partial)\varphi)(x)}$.
\end{remark}

\begin{lemma}
\label{operator P(D)}With the conventions and notations introduced in Remark
\ref{Obs p(x,z,t)}, the mapping $P(\partial):B_{\boldsymbol{\psi},\infty
}\left(  \mathbb{R}\right)  \rightarrow B_{\boldsymbol{\psi},\infty}\left(
\mathbb{R}\right)  $ is a well-defined continuous operator.
\end{lemma}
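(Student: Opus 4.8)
The plan is to realize $P(\partial)$ as a Fourier multiplier and to reduce the statement to a single pointwise estimate on the symbol $p(\xi)$. First I would compute the Fourier transform of $P(\partial)\varphi$. Since $\mathcal{F}\mathcal{F}^{-1}=\mathrm{id}$, applying $\mathcal{F}_{x\rightarrow\xi}$ to $(P(\partial)\varphi)(x)=-\mathcal{F}^{-1}_{\xi\rightarrow x}(p(\xi)\widehat{\varphi}(\xi))$ gives $\widehat{P(\partial)\varphi}(\xi)=-p(\xi)\widehat{\varphi}(\xi)$. Consequently, for every $l\in\mathbb{N}$,
\begin{equation*}
||P(\partial)\varphi||_{\boldsymbol{\psi},l}^{2}=\int_{\mathbb{Q}_{p}^{n}}[\max\{1,|\boldsymbol{\psi}(||\xi||_{p})|\}]^{l}\,p(\xi)^{2}\,|\widehat{\varphi}(\xi)|^{2}\,d^{n}\xi,
\end{equation*}
so the whole lemma hinges on comparing $p(\xi)^{2}$ with a power of $\max\{1,|\boldsymbol{\psi}(||\xi||_{p})|\}$.

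Second, I would establish the key bound $p(\xi)\leq C\,\max\{1,|\boldsymbol{\psi}(||\xi||_{p})|\}$ for some constant $C>0$, separating the two cases of Remark \ref{Obs p(x,z,t)}. In case (i), part (iii) of that remark gives $p(\xi)\leq C[\max\{1,||\xi||_{p}\}]^{\beta}$ together with the identity $\max\{1,\boldsymbol{\psi}(||\xi||_{p})\}=[\max\{1,||\xi||_{p}\}]^{\beta}$, which yields the bound at once. In case (ii), the definition $\boldsymbol{\psi}(||\xi||_{p})=\sum_{j\in J}\boldsymbol{\psi}_{j}(||\xi||_{p})$ gives $p(\xi)\leq C\boldsymbol{\psi}(||\xi||_{p})\leq C\max\{1,|\boldsymbol{\psi}(||\xi||_{p})|\}$. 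Since $p(\xi)\geq 0$ (each $\boldsymbol{\psi}_{j}\geq 0$ and $b_{j}>0$), squaring gives $p(\xi)^{2}\leq C^{2}[\max\{1,|\boldsymbol{\psi}(||\xi||_{p})|\}]^{2}$ in both cases. Substituting this into the displayed identity produces the fundamental estimate
\begin{equation*}
||P(\partial)\varphi||_{\boldsymbol{\psi},l}\leq C\,||\varphi||_{\boldsymbol{\psi},l+2},\qquad l\in\mathbb{N},\ \varphi\in\mathcal{D}_{\mathbb{R}}(\mathbb{Q}_{p}^{n}).
\end{equation*}

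Third, I would deduce both assertions from this estimate. For well-definedness, since $\mathcal{D}_{\mathbb{R}}$ is dense in $B_{\boldsymbol{\psi},\infty}(\mathbb{R})$, the estimate lets me extend $P(\partial)$ continuously; for $f\in B_{\boldsymbol{\psi},\infty}(\mathbb{R})$ one sets $P(\partial)f=\mathcal{F}^{-1}(-p\widehat{f})$, and the estimate shows $||P(\partial)f||_{\boldsymbol{\psi},l}<\infty$ for every $l$, so $P(\partial)f\in B_{\boldsymbol{\psi},\infty}(\mathbb{C})$ by the characterization in Lemma \ref{Lemma1}(iv). To see that the image is real-valued, I would approximate $f$ by $\varphi_{n}\in\mathcal{D}_{\mathbb{R}}$; by Remark \ref{operator P(D) real-valued} each $P(\partial)\varphi_{n}$ is real-valued, and since $B_{\boldsymbol{\psi},\infty}$ embeds continuously in $C_{0}(\mathbb{Q}_{p}^{n},\mathbb{C})$ (Lemma \ref{Lemma1}(v)) the convergence $P(\partial)\varphi_{n}\to P(\partial)f$ is uniform, forcing $P(\partial)f\in C_{0}(\mathbb{Q}_{p}^{n},\mathbb{R})$ and hence $P(\partial)f\in B_{\boldsymbol{\psi},\infty}(\mathbb{R})$. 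Continuity is then immediate, since a linear map between these countably Hilbert spaces is continuous precisely when each target seminorm is dominated by a source seminorm, and $||\cdot||_{\boldsymbol{\psi},l}\leq C||\cdot||_{\boldsymbol{\psi},l+2}$ supplies exactly this.

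I expect the only genuine content to be the symbol estimate of the second step, where the choice of $\boldsymbol{\psi}$ and the type 1 / type 2 dichotomy of Remark \ref{Obs p(x,z,t)} are used; everything else is formal functional analysis on the projective limit. The one subtlety worth checking carefully is that the continuous extension really lands in the \emph{real} space $B_{\boldsymbol{\psi},\infty}(\mathbb{R})$ rather than merely in $B_{\boldsymbol{\psi},\infty}(\mathbb{C})$, which is why the embedding into $C_{0}$ from Lemma \ref{Lemma1} must be invoked.
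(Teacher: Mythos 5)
Your proposal is correct and follows essentially the same route as the paper: the paper's proof also rests on the estimate $\Vert P(\partial)\varphi\Vert_{\boldsymbol{\psi},l}\leq C\Vert\varphi\Vert_{\boldsymbol{\psi},l+2}$ for $\varphi\in\mathcal{D}_{\mathbb{R}}(\mathbb{Q}_{p}^{n})$, obtained from the symbol comparison of Remark \ref{Obs p(x,z,t)}, followed by extension via density of $\mathcal{D}_{\mathbb{R}}(\mathbb{Q}_{p}^{n})$ and Remark \ref{operator P(D) real-valued} to keep the image real-valued. You merely make explicit the details the paper leaves implicit (the Fourier-multiplier computation, the case (i)/(ii) symbol bound, and the check that the limit stays in the real space), which is all consistent with the published argument.
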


\begin{proof}
In the cases (i)-(ii) of Remark \ref{Obs p(x,z,t)}, for $\varphi\in
\mathcal{D}_{\mathbb{R}}\left(
\mathbb{Q}
_{p}^{n}\right)  $, we have
\begin{equation}
||P(\partial)\varphi||_{\boldsymbol{\psi},l}\leq C||\varphi
||_{\boldsymbol{\psi},l+2}, \label{Des p(x,z,t)}%
\end{equation}
which implies (by using Remark \ref{operator P(D) real-valued} and the fact
that $\mathcal{D}_{\mathbb{R}}\left(
\mathbb{Q}
_{p}^{n}\right)  $ is dense in $B_{\boldsymbol{\psi},l}\left(  \mathbb{R}%
\right)  $ for any $l$) that $P(\partial):B_{\boldsymbol{\psi},l+2}\left(
\mathbb{R}\right)  \rightarrow B_{\boldsymbol{\psi},l}\left(  \mathbb{R}%
\right)  $ is a well-defined continuous mapping for any $l\in%
\mathbb{N}
$, and consequently $P(\partial):B_{\boldsymbol{\psi},\infty}\left(
\mathbb{R}\right)  \rightarrow B_{\boldsymbol{\psi},\infty}\left(
\mathbb{R}\right)  $ is a well-defined operator. The continuity is established
by using an argument based on sequences and (\ref{Des p(x,z,t)}).
\end{proof}

\begin{example}
\label{Example5}Take $\beta>0$ and set $\boldsymbol{\psi}_{\beta}\left(
\left\Vert \xi\right\Vert _{p}\right)  :=\left\Vert \xi\right\Vert _{p}%
^{\beta}$. Then the operator $D_{\boldsymbol{\psi}_{\beta}}$ is the Taibleson
operator $D_{T}^{\beta}$, see Example \ref{example neg def}, which admits the
following representation:
\[
\left(  D_{\boldsymbol{\psi}_{\beta}}\varphi\right)  \left(  x\right)
=\frac{1-p^{\beta}}{1-p^{-\beta-n}}\int\limits_{%
\mathbb{Q}
_{p}^{n}}\left\Vert y\right\Vert _{p}^{-\beta-n}\left(  \varphi\left(
x-y\right)  -\varphi\left(  x\right)  \right)  d^{n}y
\]
for $\varphi\in\mathcal{D}_{\mathbb{R}}\left(
\mathbb{Q}
_{p}^{n}\right)  $, see \cite{R-Zu}. Notice that $-D_{\boldsymbol{\psi}%
_{\beta}}$ satisfies Remark \ref{operator P(D) real-valued}\ and the positive
maximum principle on $\mathcal{D}_{\mathbb{R}}\left(
\mathbb{Q}
_{p}^{n}\right)  $. On the other hand, $-D_{\boldsymbol{\psi}_{\beta}}%
\varphi=-f_{-\beta}\ast\varphi$, where $f_{-\beta}\in\mathcal{D}_{\mathbb{R}%
}^{\prime}\left(
\mathbb{Q}
_{p}^{n}\right)  $ is the Riesz kernel. By identifying $f_{-\beta}$ with a
positive measure, we have
\[
\left\Vert -D_{\boldsymbol{\psi}_{\beta}}\varphi\right\Vert _{L^{\infty}%
}=\left\Vert -f_{-\beta}\ast\varphi\right\Vert _{L^{\infty}}\leq\left\Vert
-f_{-\beta}\right\Vert \left\Vert \varphi\right\Vert _{L^{\infty}},
\]
where \ $\left\Vert -f_{-\beta}\right\Vert <\infty$ is the total variation of
$-f_{-\beta}$. This implies that $-D_{\boldsymbol{\psi}_{\beta}}\varphi$ has a
unique continuous extension to $C_{0}\left(
\mathbb{Q}
_{p}^{n},\mathbb{R}\right)  $, which satisfies the positive maximum principle.
\end{example}

\begin{example}
\label{Example6}Set $J$ as in Example \ref{Example2}. Then the function
$\boldsymbol{\psi}_{J}\left(  \left\Vert \xi\right\Vert _{p}\right)
:=1-\widehat{J}(||\xi||_{p})$ is negative definite and it satisfies
$0\leq1-\widehat{J}(||\xi||_{p})\leq2$ for $\xi\in%
\mathbb{Q}
_{p}^{n}$, which implies that $\boldsymbol{\psi}_{J}\left(  \left\Vert
\xi\right\Vert _{p}\right)  $ is of type 0. Then%
\begin{align*}
\left(  D_{\boldsymbol{\psi}_{J}}\varphi\right)  \left(  x\right)   &
=\mathcal{F}_{\xi\rightarrow x}^{-1}\left(  \left\{  1-\widehat{J}(||\xi
||_{p})\right\}  \mathcal{F}_{x\rightarrow\xi}\varphi\right) \\
&  =-\int\limits_{%
\mathbb{Q}
_{p}^{n}}J\left(  \left\Vert x-y\right\Vert _{p}\right)  \left(
\varphi\left(  y\right)  -\varphi\left(  x\right)  \right)  d^{n}y
\end{align*}
for $\varphi\in\mathcal{D}_{\mathbb{R}}\left(
\mathbb{Q}
_{p}^{n}\right)  $. Notice that $-D_{\boldsymbol{\psi}_{\beta}}$ satisfies
Remark \ref{operator P(D) real-valued}\ and the positive maximum principle.
\ Since%
\[
\left\Vert D_{\boldsymbol{\psi}_{J}}\varphi\right\Vert _{L^{\infty}}%
\leq\left(  1+\left\Vert J\right\Vert _{L^{1}}\right)  \left\Vert
\varphi\right\Vert _{L^{\infty}},
\]
we have $D_{\boldsymbol{\psi}_{J}}\varphi$ has a unique continuous extension
to $C_{0}\left(
\mathbb{Q}
_{p}^{n},\mathbb{R}\right)  $, which satisfies the positive maximum principle.
\end{example}

\begin{example}
\label{Example7}Take $\boldsymbol{\psi}_{\beta_{i}}\left(  \left\Vert
\xi\right\Vert _{p}\right)  $ for $i=1,\ldots,l$ with $0<\beta_{1}%
<\cdots<\beta_{l}$ as in Example \ref{Example5}, and let $J_{i}$ be functions
as in Example \ref{Example2} for $i=l+1,\ldots,m$, and set $\boldsymbol{\psi
}_{J_{i}}\left(  \left\Vert \xi\right\Vert _{p}\right)  :=1-\widehat{J_{i}%
}(||\xi||_{p})$ as in Example \ref{Example6}. Then
\[
\boldsymbol{\psi}(||\xi||_{p}):=\sum_{i=1}^{l}b_{i}\boldsymbol{\psi}%
_{\beta_{i}}\left(  \left\Vert \xi\right\Vert _{p}\right)  +\sum_{i=l+1}%
^{m}b_{i}\boldsymbol{\psi}_{J_{i}}\left(  \left\Vert \xi\right\Vert
_{p}\right)  ,
\]
where the $b_{i}$'s are as before, is a negative definite function of type 1,
and it satisfies $\boldsymbol{\psi}(||\xi||_{p})\leq C\max\left\{
1,||\xi||_{p}\right\}  ^{\beta_{l}}$. Then the operator%
\[
P(\partial)=-\sum_{i=1}^{l}b_{i}D_{\boldsymbol{\psi}_{\beta_{i}}}-\sum
_{i=l+1}^{m}b_{i}D_{\boldsymbol{\psi}_{J_{i}}}%
\]
satisfies Remark \ref{operator P(D) real-valued} and the positive maximum principle.
\end{example}

\begin{lemma}
\label{Lambda+P}For any fixed positive real number $\lambda$, the equation
\begin{equation}
(\lambda-P(\partial))u=f,\text{ }f\in B_{\boldsymbol{\psi},\infty}%
(\mathbb{R}), \label{equation lambda}%
\end{equation}
has a unique solution $u$ in $B_{\boldsymbol{\psi},\infty}(\mathbb{R})$.
\end{lemma}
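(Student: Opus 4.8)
The plan is to solve \eqref{equation lambda} explicitly on the Fourier side and then verify that the resulting candidate lies in the correct space. Applying the Fourier transform and using that $\widehat{P(\partial)u}(\xi)=-p(\xi)\widehat{u}(\xi)$ --- a relation which holds on $\mathcal{D}_{\mathbb{R}}(\mathbb{Q}_p^n)$ by the very definition of $P(\partial)$ and which I would extend to all of $B_{\boldsymbol{\psi},\infty}(\mathbb{R})$ using the density of $\mathcal{D}_{\mathbb{R}}(\mathbb{Q}_p^n)$, the continuity of $P(\partial)$ from Lemma \ref{operator P(D)}, and the $L^{2}$-continuity of the Fourier transform --- equation \eqref{equation lambda} becomes $(\lambda+p(\xi))\widehat{u}(\xi)=\widehat{f}(\xi)$.

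The key observation is that, since each $\boldsymbol{\psi}_{j}$ takes values in $\mathbb{R}_{+}$ and each $b_{j}>0$, the symbol satisfies $p(\xi)\geq 0$, so that $\lambda+p(\xi)\geq\lambda>0$ for every $\xi$. Hence I can divide and define
\[
u:=\mathcal{F}^{-1}\!\left(\frac{\widehat{f}}{\lambda+p}\right),
\]
which is the only possible candidate. This already gives uniqueness: if $(\lambda-P(\partial))u=0$, then $(\lambda+p(\xi))\widehat{u}(\xi)=0$, whence $\widehat{u}=0$ a.e.\ and $u=0$. Conversely, for this $u$ one has $\widehat{(\lambda-P(\partial))u}=(\lambda+p)\widehat{u}=\widehat{f}$, so it does solve the equation.

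To show $u\in B_{\boldsymbol{\psi},\infty}(\mathbb{R})$, I would invoke the characterization in Lemma \ref{Lemma1}(iii)--(iv). From the pointwise bound $|\widehat{u}(\xi)|\leq \lambda^{-1}|\widehat{f}(\xi)|$ one obtains, for every $l\in\mathbb{N}$,
\[
\|u\|_{\boldsymbol{\psi},l}^{2}=\int_{\mathbb{Q}_p^{n}}[\max\{1,|\boldsymbol{\psi}(\|\xi\|_p)|\}]^{l}\,|\widehat{u}(\xi)|^{2}\,d^{n}\xi\leq \lambda^{-2}\|f\|_{\boldsymbol{\psi},l}^{2}<\infty,
\]
since $f\in B_{\boldsymbol{\psi},\infty}(\mathbb{R})$ has finite $\|\cdot\|_{\boldsymbol{\psi},l}$-norm for all $l$. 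This places $u$ in $B_{\boldsymbol{\psi},\infty}(\mathbb{C})$. To see that $u$ is in fact real-valued, I would use that $p$ is radial, hence even, and that $f$ real-valued gives $\widehat{f}(-\xi)=\overline{\widehat{f}(\xi)}$; then $\widehat{u}(-\xi)=\overline{\widehat{u}(\xi)}$, so that $u\in B_{\boldsymbol{\psi},\infty}(\mathbb{R})$.

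The computations are routine; the only step requiring a little care is the justification that the symbol identity $\widehat{P(\partial)u}=-p\,\widehat{u}$ persists on all of $B_{\boldsymbol{\psi},\infty}(\mathbb{R})$ and not merely on test functions. This is where the continuity statement of Lemma \ref{operator P(D)}, together with the density of $\mathcal{D}_{\mathbb{R}}(\mathbb{Q}_p^n)$ and Remark \ref{operator P(D) real-valued}, does the work of passing the identity to the limit.
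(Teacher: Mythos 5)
Your proof is correct, and its skeleton coincides with the paper's: both solve the equation on the Fourier side as $\widehat{u}=\widehat{f}/(\lambda+p)$, use $p(\xi)\geq 0$ to get the pointwise bound $|\widehat{u}(\xi)|\leq\lambda^{-1}|\widehat{f}(\xi)|$, and conclude $u\in B_{\boldsymbol{\psi},\infty}(\mathbb{C})$ via Lemma \ref{Lemma1}. The genuine divergence is in the real-valuedness of $u$, which is exactly the point the paper isolates as its Claim: there, $u$ is written as the $L^{2}$-limit of the truncated integrals $u_{k}(x;\lambda)=\int_{\|\xi\|_{p}\leq p^{k}}\chi_{p}(-\xi\cdot x)\widehat{f}(\xi)\left(\lambda+H(\|\xi\|_{p})\right)^{-1}d^{n}\xi$, each $u_{k}$ is shown real by a direct conjugation computation, a subsequence converging almost uniformly is extracted to get $u$ real a.e., and continuity ($B_{\boldsymbol{\psi},\infty}(\mathbb{C})\subset C_{0}(\mathbb{Q}_{p}^{n},\mathbb{C})$) upgrades this to everywhere. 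You instead argue via Hermitian symmetry: $f$ real gives $\widehat{f}(-\xi)=\overline{\widehat{f}(\xi)}$, and since $\lambda+p$ is real, strictly positive and even (being radial), $\widehat{u}(-\xi)=\overline{\widehat{u}(\xi)}$, forcing $u=\overline{u}$ a.e.; this is shorter and avoids the subsequence extraction, though you should still append the paper's final continuity step to pass from a.e.\ real to real everywhere, and hence to $u\in B_{\boldsymbol{\psi},\infty}(\mathbb{R})$ in the sense of Remark \ref{Nota_Lemma1}. A second small difference in your favor: you make explicit that the symbol identity $\widehat{P(\partial)u}=-p\,\widehat{u}$ extends from $\mathcal{D}_{\mathbb{R}}(\mathbb{Q}_{p}^{n})$ to all of $B_{\boldsymbol{\psi},\infty}(\mathbb{R})$ by density together with the estimate underlying Lemma \ref{operator P(D)} (concretely, $\|p(\widehat{\varphi_{n}}-\widehat{u})\|_{L^{2}}\leq C\|\varphi_{n}-u\|_{\boldsymbol{\psi},2}$, since $p(\xi)\leq C\max\{1,|\boldsymbol{\psi}(\|\xi\|_{p})|\}$ by Remark \ref{Obs p(x,z,t)}); the paper uses this identity tacitly, both when defining $\widehat{u}(\xi;\lambda)$ and in its one-line uniqueness assertion, so your Plancherel-based uniqueness argument is correspondingly more self-contained.
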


\begin{proof}
By using that $f\in L^{2}$, we have
\[
\widehat{u}\left(  \xi;\lambda\right)  =\frac{\widehat{f}(\xi)}{\lambda
+\sum_{j=1}^{m}b_{j}\boldsymbol{\psi}_{j}(||\xi||_{p})}:=\frac{\widehat{f}%
(\xi)}{\lambda+H(||\xi||_{p})}.
\]
We now recall that $\boldsymbol{\psi}_{j}(||\xi||_{p})\geq0$ for any $j$, see
\cite[Proposition 7.5 and p. 39]{Berg-Gunnar}, then
\[
\inf_{\xi}H(||\xi||_{p})\geq0,
\]
and consequently $\left\vert \widehat{u}\left(  \xi;\lambda\right)
\right\vert \leq\frac{\left\vert \widehat{f}(\xi)\right\vert }{\lambda}$. By
Lemma \ref{Lemma1} (ii)-(iii), $u\left(  x;\lambda\right)  \in
B_{\boldsymbol{\psi},\infty}(\mathbb{C})$.\ The uniqueness follows from the
fact that $(\lambda-P(\partial))u=0$ has $u=0$ as a unique solution since
$B_{\boldsymbol{\psi},\infty}(\mathbb{C})\subset C_{0}(%
\mathbb{Q}
_{p}^{n},\mathbb{C})$, see Lemma \ref{Lemma1} (v). We now show that $u\left(
x;\lambda\right)  $ is a real valued function. Recall that
\[
u\left(  x;\lambda\right)  =\lim_{k\rightarrow\infty}u_{k}\left(
x;\lambda\right)  \text{ in }L^{2}\text{-sense,}%
\]
where
\[
u_{k}\left(  x;\lambda\right)  =\int\limits_{||\xi||_{p}\leq p^{k}}\frac
{\chi_{p}\left(  -\xi\cdot x\right)  \widehat{f}(\xi)}{\lambda+H(||\xi||_{p}%
)}d^{n}\xi.
\]
\textbf{Claim }$u\left(  x;\lambda\right)  =\lim_{k\rightarrow\infty}%
u_{k}\left(  x;\lambda\right)  $ in $L^{2}$-sense, where the \ $u_{k}\left(
x;\lambda\right)  $'s are real valued functions.

Then there exists a subsequence of $\left\{  u_{k}\left(  x;\lambda\right)
\right\}  _{k\in\mathbb{N}}$ converging almost uniformly to the same limit,
which implies that $u\left(  x;\lambda\right)  $ is a real valued function
outside of a zero measure subset of $%
\mathbb{Q}
_{p}^{n}$. Since $u\left(  x;\lambda\right)  $ is a continuous function in
$x$, $B_{\boldsymbol{\psi},\infty}(\mathbb{C})\subset C_{0}(%
\mathbb{Q}
_{p}^{n},\mathbb{C})$, necessarily $u\left(  x;\lambda\right)  $ is a
continuous function at every $x\in%
\mathbb{Q}
_{p}^{n}$.

\textbf{Proof of the} \textbf{Claim. }It is sufficient to show that
$u_{k}\left(  x;\lambda\right)  =\overline{u_{k}\left(  x;\lambda\right)  }$
for any $k$. Indeed,
\begin{align*}
\overline{u_{k}\left(  x;\lambda\right)  }  &  =\int\limits_{||\xi||_{p}\leq
p^{k}}\frac{\chi_{p}\left(  \xi\cdot x\right)  \overline{\widehat{f}(\xi}%
)}{\lambda+H(||\xi||_{p})}d^{n}\xi=\int\limits_{||\xi||_{p}\leq p^{k}}%
\frac{\chi_{p}\left(  \xi\cdot x\right)  \left(  \mathcal{F}^{-1}f\right)
(\xi)}{\lambda+H(||\xi||_{p})}d^{n}\xi\\
&  =\int\limits_{||\xi||_{p}\leq p^{k}}\frac{\chi_{p}\left(  -\xi\cdot
x\right)  \widehat{f}(\xi)}{\lambda+H(||\xi||_{p})}d^{n}\xi,
\end{align*}
where we used that $\overline{\widehat{f}(\xi})=\left(  \mathcal{F}%
^{-1}f\right)  (\xi)$ and $\left(  \mathcal{F}^{-1}f\right)  (-\xi
)=\widehat{f}(\xi)$ for $f\in L^{2}$.
\end{proof}

\begin{theorem}
\label{Feller semigroups} Assume that $(P(\partial),B_{\boldsymbol{\psi
},\infty}\left(  \mathbb{R}\right)  )$ satisfies the positive maximum
principle. Then the closure $(\overline{P(\partial)},Dom(\overline
{P(\partial)})$ of $(P(\partial),B_{\boldsymbol{\psi},\infty}\left(
\mathbb{R}\right)  )$ on $C_{0}(%
\mathbb{Q}
_{p}^{n},\mathbb{R})$ is the generator of a Feller semigroup.
\end{theorem}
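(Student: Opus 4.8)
The plan is to verify the three hypotheses of the Hille--Yosida--Ray Theorem (Theorem~\ref{Hille-Yosida-Ray}) for the operator $(P(\partial),B_{\boldsymbol{\psi},\infty}(\mathbb{R}))$ regarded as a densely defined operator on $C_{0}(\mathbb{Q}_{p}^{n},\mathbb{R})$, and then invoke that theorem to conclude that the closure $\overline{P(\partial)}$ generates a strongly continuous positive contraction semigroup, which is precisely a Feller semigroup by the remark following Theorem~\ref{Hille-Yosida-Ray}. All three conditions are nearly in hand from the lemmas already established, so the proof should be short and structural rather than computational.

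For condition (i), the density of $Dom(P(\partial))=B_{\boldsymbol{\psi},\infty}(\mathbb{R})$ in $C_{0}(\mathbb{Q}_{p}^{n},\mathbb{R})$ is exactly the content of Lemma~\ref{Lemma1}(vi), so I would simply cite it. Condition (ii), the positive maximum principle, is assumed as a hypothesis of the theorem, so nothing needs to be proved there; I would just note that it holds by assumption (and point out that Examples~\ref{Example5}--\ref{Example7} exhibit natural situations where it is satisfied). The main work is condition (iii): I must show that $\mathrm{Rank}(\lambda I - P(\partial))$ is dense in $C_{0}(\mathbb{Q}_{p}^{n},\mathbb{R})$ for some $\lambda>0$. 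Here I would appeal to Lemma~\ref{Lambda+P}, which asserts that for every $\lambda>0$ and every $f\in B_{\boldsymbol{\psi},\infty}(\mathbb{R})$ the equation $(\lambda-P(\partial))u=f$ has a solution $u\in B_{\boldsymbol{\psi},\infty}(\mathbb{R})$. This shows that $B_{\boldsymbol{\psi},\infty}(\mathbb{R})\subseteq \mathrm{Rank}(\lambda I-P(\partial))$, and since $B_{\boldsymbol{\psi},\infty}(\mathbb{R})$ is itself dense in $C_{0}(\mathbb{Q}_{p}^{n},\mathbb{R})$ by Lemma~\ref{Lemma1}(vi), the range is dense, giving (iii).

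The step I expect to require the most care is the bookkeeping in condition (iii): Lemma~\ref{Lambda+P} produces a preimage $u$ only for $f$ lying in $B_{\boldsymbol{\psi},\infty}(\mathbb{R})$ (not for arbitrary $f\in C_{0}$), so I must be careful to phrase the conclusion as range \emph{density} rather than surjectivity, using the chain $B_{\boldsymbol{\psi},\infty}(\mathbb{R})\subseteq \mathrm{Rank}(\lambda I-P(\partial))$ together with the density of the former in the target Banach space. With the three hypotheses verified, Theorem~\ref{Hille-Yosida-Ray} yields that $\overline{P(\partial)}$ is single-valued and generates a strongly continuous positive contraction semigroup $\{T(t)\}_{t\geq 0}$ on $C_{0}(\mathbb{Q}_{p}^{n},\mathbb{R})$. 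Finally, since a strongly continuous positive contraction semigroup is a Feller semigroup, the proof is complete.
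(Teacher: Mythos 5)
Your proposal is correct and follows essentially the same route as the paper's own proof: both verify the three hypotheses of Theorem~\ref{Hille-Yosida-Ray} by citing Lemma~\ref{Lemma1}(vi) for density of the domain, the theorem's hypothesis for the positive maximum principle, and Lemma~\ref{Lambda+P} for density of the range of $\lambda - P(\partial)$, then conclude via the Hille--Yosida--Ray theorem. Your explicit care in stating condition (iii) as range \emph{density} (via the inclusion $B_{\boldsymbol{\psi},\infty}(\mathbb{R})\subseteq \mathrm{Rank}(\lambda I-P(\partial))$ plus Lemma~\ref{Lemma1}(vi)) rather than surjectivity onto $C_{0}(\mathbb{Q}_{p}^{n},\mathbb{R})$ is a slightly more precise rendering of the same step the paper takes.
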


\begin{proof}
We show that $(P(\partial),B_{\boldsymbol{\psi},\infty}\left(  \mathbb{R}%
\right)  )$ satisfies conditions (i) and (iii) in the Hille-Yosida-Ray's
theorem, see Theorem \ref{Hille-Yosida-Ray}. By Lemma \ref{operator P(D)},
$P(\partial):B_{\boldsymbol{\psi},\infty}\left(  \mathbb{R}\right)
\rightarrow B_{\boldsymbol{\psi},\infty}\left(  \mathbb{R}\right)  $ is a
well-defined continuous operator, and by Lemma \ref{Lemma1}-(vi),
$B_{\boldsymbol{\psi},\infty}\left(  \mathbb{R}\right)  $ is densely and
continuously embedded in $C_{0}(%
\mathbb{Q}
_{p}^{n},\mathbb{R})$. For the third condition, Lemma \ref{Lambda+P} implies
that the rank of $\lambda-P(\partial)$ is $B_{\boldsymbol{\psi},\infty}\left(
\mathbb{R}\right)  $ which is dense in $C_{0}(%
\mathbb{Q}
_{p}^{n},\mathbb{R})$.
\end{proof}

\section{\label{Parabolic-Type Equations}Parabolic-Type Equations}

Let $T>0$ and let $f(x,t):%
\mathbb{Q}
_{p}^{n}\times\lbrack0,T]\rightarrow%
\mathbb{R}
$ such that $f(x,\cdot):[0,T]\rightarrow C_{0}(%
\mathbb{Q}
_{p}^{n},\mathbb{R})$, let $b_{j}$ be positive real numbers, for
$j=1,\ldots,m$, and consider
\[%
\begin{array}
[c]{cccc}%
P(\partial): & B_{\boldsymbol{\psi},\infty}\left(  \mathbb{R}\right)  &
\rightarrow & B_{\boldsymbol{\psi},\infty}\left(  \mathbb{R}\right) \\
&  &  & \\
& g & \longrightarrow & -\sum_{j=1}^{m}b_{j}D_{\boldsymbol{\psi}_{j}}g.
\end{array}
\]
Our aim is to study the following initial value problem:%
\begin{equation}
\left\{
\begin{array}
[c]{l}%
u(x,\cdot)\in C([0,T],B_{\boldsymbol{\psi},\infty}\left(  \mathbb{R}\right)
)\cap C^{1}([0,T],C_{0}(%
\mathbb{Q}
_{p}^{n},\mathbb{R}));\\
\\
\frac{\partial u}{\partial t}(x,t)=P(\partial)u(x,t)+f(x,t)\text{, \ }%
t\in\lbrack0,T]\text{,\ }x\in%
\mathbb{Q}
_{p}^{n};\\
\\
u(x,0)=h(x)\in B_{\boldsymbol{\psi},\infty}\left(  \mathbb{R}\right)  .
\end{array}
\right.  \label{IVP}%
\end{equation}

The proof of the following lemma is included for the sake of completeness.

\begin{lemma}
\label{m-dissipative}The operator $\overline{P(\partial)}:Dom(\overline
{P(\partial))}\rightarrow C_{0}(%
\mathbb{Q}
_{p}^{n},\mathbb{R})$ is $m-$dissipative.
\end{lemma}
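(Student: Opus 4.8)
The plan is to establish the two defining properties of $m$-dissipativity for $\overline{P(\partial)}$ on the Banach space $(C_0(\mathbb{Q}_p^n,\mathbb{R}),\|\cdot\|_{L^\infty})$: first, the dissipativity estimate $\|(\lambda I-\overline{P(\partial)})u\|_{L^\infty}\geq\lambda\|u\|_{L^\infty}$ for every $\lambda>0$ and every $u\in Dom(\overline{P(\partial)})$; and second, that $\mathrm{Range}(\lambda I-\overline{P(\partial)})=C_0(\mathbb{Q}_p^n,\mathbb{R})$ for some (hence every) $\lambda>0$. For the first property I would invoke the standing hypothesis that $(P(\partial),B_{\boldsymbol{\psi},\infty}(\mathbb{R}))$ satisfies the positive maximum principle; by the remark preceding Theorem \ref{Hille-Yosida-Ray} (namely \cite[Chapter 4, Lemma 2.1]{E-K}) this forces $P(\partial)$ to be dissipative on $B_{\boldsymbol{\psi},\infty}(\mathbb{R})$. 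Since dissipativity is inherited by the closure of an operator, $\overline{P(\partial)}$ is dissipative, which gives the desired estimate.

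For the surjectivity property I would argue that $\mathrm{Range}(\lambda I-\overline{P(\partial)})$ is both dense and closed in $C_0(\mathbb{Q}_p^n,\mathbb{R})$, and therefore equal to the whole space. Density is supplied by Lemma \ref{Lambda+P}: for a fixed $\lambda>0$ and any $f\in B_{\boldsymbol{\psi},\infty}(\mathbb{R})$ there exists $u\in B_{\boldsymbol{\psi},\infty}(\mathbb{R})$ with $(\lambda-P(\partial))u=f$, so that $B_{\boldsymbol{\psi},\infty}(\mathbb{R})\subseteq\mathrm{Range}(\lambda-P(\partial))\subseteq\mathrm{Range}(\lambda I-\overline{P(\partial)})$; since $B_{\boldsymbol{\psi},\infty}(\mathbb{R})$ is dense in $C_0(\mathbb{Q}_p^n,\mathbb{R})$ by Lemma \ref{Lemma1}-(vi), the range is dense. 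Closedness of the range is the standard consequence of the dissipativity estimate combined with the closedness of $\overline{P(\partial)}$: if $(\lambda I-\overline{P(\partial)})u_n\to v$, then the estimate forces $\{u_n\}$ to be Cauchy, hence $u_n\to u$ for some $u$; then $\overline{P(\partial)}u_n=\lambda u_n-(\lambda I-\overline{P(\partial)})u_n$ converges, and closedness of $\overline{P(\partial)}$ yields $u\in Dom(\overline{P(\partial)})$ with $(\lambda I-\overline{P(\partial)})u=v$, so $v$ lies in the range.

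Combining the two properties gives the $m$-dissipativity of $\overline{P(\partial)}$. The only step requiring genuine attention is the passage from a dense range to a full range: this is precisely where the dissipativity inequality earns its keep, by making the range closed, so that a dense closed subspace must be the entire space. All remaining ingredients are bookkeeping: assembling the positive-maximum-principle-implies-dissipative remark, Lemma \ref{Lambda+P}, and the density statement of Lemma \ref{Lemma1}-(vi). Alternatively, one may simply observe that Theorem \ref{Feller semigroups} already identifies $\overline{P(\partial)}$ as the generator of a strongly continuous contraction semigroup on $C_0(\mathbb{Q}_p^n,\mathbb{R})$, whence $m$-dissipativity is immediate from the Hille--Yosida theory; the direct argument above is recorded for completeness.
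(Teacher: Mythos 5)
Your proof is correct, and on the range half of the argument it takes a genuinely different route from the paper. The dissipativity half coincides with the paper's: there, too, dissipativity of $P(\partial)$ comes from the positive maximum principle via \cite[Chapter 4, Lemma 2.1]{E-K}, and the estimate is passed to $\overline{P(\partial)}$ by an explicit sequence argument, which is exactly your ``dissipativity is inherited by the closure.'' For the range condition, however, the paper does not use your abstract dense-plus-closed-range mechanism. Invoking the characterization in \cite[Proposition 2.2.6]{C-H}, it fixes $f\in C_{0}(\mathbb{Q}_{p}^{n},\mathbb{R})$, approximates it by test functions $g_{m}$, solves $(\lambda-P(\partial))u_{m}=g_{m}$ by Lemma \ref{Lambda+P}, and proves that $\{u_{m}\}$ is Cauchy in every norm $\|\cdot\|_{\boldsymbol{\psi},l}$ through the explicit Fourier-side resolvent bound $|\widehat{u_{m}}(\xi)-\widehat{u_{n}}(\xi)|\leq\lambda^{-1}|\widehat{g_{m}}(\xi)-\widehat{g_{n}}(\xi)|$, concluding that the limit lies in $B_{\boldsymbol{\psi},\infty}(\mathbb{R})$. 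You instead use the standard fact that for a closed dissipative operator the range of $\lambda I-\overline{P(\partial)}$ is closed, so the dense range supplied (exactly as in the paper) by Lemma \ref{Lambda+P} together with Lemma \ref{Lemma1}-(vi) is automatically all of $C_{0}(\mathbb{Q}_{p}^{n},\mathbb{R})$. Your route buys robustness: your Cauchy estimate lives in the $L^{\infty}$ norm, where the convergence $g_{m}\to f$ actually holds, whereas the paper's Cauchy claim needs the approximating data to be Cauchy in the $\|\cdot\|_{\boldsymbol{\psi},l}$ norms, which mere $L^{\infty}$-density of $B_{\boldsymbol{\psi},\infty}(\mathbb{R})$ in $C_{0}$ does not provide for a general $f\in C_{0}$; indeed the paper's stated conclusion $u\in B_{\boldsymbol{\psi},\infty}(\mathbb{R})$ cannot hold for every such $f$, since it would force $f=(\lambda-P(\partial))u\in B_{\boldsymbol{\psi},\infty}(\mathbb{R})$. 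Your abstract argument quietly repairs this, at the cost of saying nothing in the finer topology, where the paper's computation (for data genuinely in $B_{\boldsymbol{\psi},\infty}(\mathbb{R})$) yields the sharper estimate $\|u\|_{\boldsymbol{\psi},l}\leq\lambda^{-1}\|f\|_{\boldsymbol{\psi},l}$. Your closing alternative---reading $m$-dissipativity off Theorem \ref{Feller semigroups} via Lumer--Phillips---is also legitimate and non-circular, since that theorem is proved by Hille--Yosida--Ray independently of this lemma; both your arguments, like the paper's, rely on the standing positive maximum principle hypothesis, which you correctly flag even though it is not repeated in the lemma's statement.
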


\begin{proof}
We first verify that $\left(  \overline{P(\partial)},Dom(\overline
{P(\partial)})\right)  $ is dissipative, i.e.
\begin{equation}
\left\Vert \left(  1-\lambda\overline{P(\partial)}\right)  u\right\Vert
_{L^{\infty}}\geq\left\Vert u\right\Vert _{L^{\infty}} \label{Eq16}%
\end{equation}
for $u\in Dom(\overline{P(\partial))}$. Take functions $u$, $v$ such that
$\overline{P(\partial)}u=v$, then there exist a sequences $\left\{
u_{m}\right\}  _{m\in\mathbb{N}}$ in $B_{\boldsymbol{\psi},\infty}\left(
\mathbb{R}\right)  $ and $\left\{  v_{m}\right\}  _{m\in\mathbb{N}}$ in
$C_{0}(%
\mathbb{Q}
_{p}^{n},\mathbb{R})$ such that $u_{m}\rightarrow u$, $v_{m}\rightarrow v$,
and $P(\partial)u_{m}=v_{m}$. Since $P(\partial)$ is dissipative, cf.
\cite[Chapter 4, Lemma 2.1]{E-K}, we have
\begin{equation}
\left\Vert \left(  1-\lambda P(\partial)\right)  u_{m}\right\Vert _{L^{\infty
}}\geq\left\Vert u_{m}\right\Vert _{L^{\infty}}. \label{Eq17}%
\end{equation}
Now (\ref{Eq16}) follows from (\ref{Eq17}) by taking the limit $m\rightarrow
\infty$.

To show that $\left(  \overline{P(\partial)},Dom(\overline{P(\partial
))}\right)  $ is $m-$dissipative, we show that there exists $\lambda>0$ such
that for all $f\in C_{0}(%
\mathbb{Q}
_{p}^{n},\mathbb{R})$, there exists a solution $u\in Dom\left(  \overline
{P(\partial)}\right)  $ of $\left(  1-\lambda\overline{P(\partial)}\right)
u=f$, cf. \cite[Proposition 2.2.6]{C-H}. Since $B_{\boldsymbol{\psi},\infty
}(\mathbb{R})$ is dense in $C_{0}(%
\mathbb{Q}
_{p}^{n},\mathbb{R})$, there exists a sequence $\left\{  f_{m}\right\}
_{m\in\mathbb{N}}$ in $B_{\boldsymbol{\psi},\infty}(\mathbb{R})$ such that
$f_{m}$ $\underrightarrow{||\cdot||_{\boldsymbol{\psi},l}}$ $f$, for any
$l\in\mathbb{N}$, now by using the density of $\mathcal{D}_{\mathbb{R}}\left(
%
\mathbb{Q}
_{p}^{n}\right)  $\ in $B_{\boldsymbol{\psi},\infty}(\mathbb{R})$, there
exists a sequence $\left\{  g_{m}\right\}  _{m\in\mathbb{N}}$ in
$\mathcal{D}_{\mathbb{R}}\left(
\mathbb{Q}
_{p}^{n}\right)  $ such that $\left\Vert f_{m}-g_{m}\right\Vert
_{\boldsymbol{\psi},l}\leq\frac{1}{m}$, thus $g_{m}$ $\underrightarrow
{||\cdot||_{\boldsymbol{\psi},l}}$ $f$ and since $B_{\boldsymbol{\psi},\infty
}(\mathbb{R})$ $\hookrightarrow$ $C_{0}(%
\mathbb{Q}
_{p}^{n},\mathbb{R})$ by Lemma \ref{Lemma1} (vi), we get that $g_{m}$
$\underrightarrow{||\cdot||_{L^{\infty}}}$ $f$ . For each $g_{m}$, there
exists $u_{m}\in B_{\boldsymbol{\psi},\infty}\left(  \mathbb{R}\right)  $ such
that for any $\lambda>0$, $\left(  1-\lambda^{-1}P(\partial)\right)  \lambda
u_{m}=g_{m}$, i.e. $P(\partial)u_{m}=\lambda u_{m}-g_{m}$, cf. Lemma
\ref{Lambda+P}.

\textbf{Claim.} The sequence $\left\{  u_{m}\right\}  _{m\in\mathbb{N}}$ is
Cauchy in $B_{\boldsymbol{\psi},l}(\mathbb{R})$ for any $l\in\mathbb{N}$, i.e.
is Cauchy in $B_{\boldsymbol{\psi},\infty}(\mathbb{R})$.

By the Claim, $u_{m}\rightarrow u\in B_{\boldsymbol{\psi},\infty}\left(
\mathbb{R}\right)  \hookrightarrow C_{0}(%
\mathbb{Q}
_{p}^{n},\mathbb{R})$ for some $u$. Therefore, $\overline{P(\partial
)}u=\lambda u-f$, i.e. $\left(  1-\lambda^{-1}\overline{P(\partial)}\right)
\lambda u=f$.

\textbf{Proof of the Claim.}

By using that
\[
\widehat{u_{m}}\left(  \xi;\lambda\right)  =\frac{\widehat{g_{m}}(\xi
)}{\lambda+\sum_{j=1}^{m}b_{j}\boldsymbol{\psi}_{j}(||\xi||_{p})},
\]
we have%
\[
\left\vert \widehat{u_{m}}\left(  \xi;\lambda\right)  -\widehat{u_{n}}\left(
\xi;\lambda\right)  \right\vert \leq\frac{\left\vert \widehat{g_{m}}%
(\xi)-\widehat{g_{n}}(\xi)\right\vert }{\lambda},
\]
which implies that $\left\Vert u_{m}\left(  \xi;\lambda\right)  -u_{n}\left(
\xi;\lambda\right)  \right\Vert _{\boldsymbol{\psi},l}\leq\frac{1}{\lambda
}||g_{m}(\xi)-g_{n}(\xi)||_{\boldsymbol{\psi},l}$ for any $l\in\mathbb{N}$,
and since $g_{m}$ $\underrightarrow{||\cdot||_{\boldsymbol{\psi},l}}$ $f$, for
any $l\in\mathbb{N}$, the sequence $\left\{  u_{m}\right\}  _{m\in\mathbb{N}}$
is Cauchy in $B_{\boldsymbol{\psi},\infty}(\mathbb{R})$.
\end{proof}

\begin{lemma}
\label{Lemma 8}Assume that at lest one of the following conditions hold:

\noindent(i) $f(x,\cdot)\in L^{1}((0,T),B_{\boldsymbol{\psi},\infty}\left(
\mathbb{R}\right)  )$;

\noindent(ii) $f(x,\cdot)\in W^{1,1}((0,T),C_{0}(%
\mathbb{Q}
_{p}^{n},\mathbb{R}))$.

Then the initial value problem (\ref{IVP}) has a unique solution of the form
\[
u(x,t)=T(t)h(x)+%
{\displaystyle\int\nolimits_{0}^{t}}
T(t-s)f(x,s)ds\text{, for all }t\in\lbrack0,T]\text{,\ }%
\]
where $(T(t))_{t\geq0}$ is the Feller semigroup associated to the operator
$\overline{P(\partial)}$.
\end{lemma}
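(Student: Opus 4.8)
The plan is to read (\ref{IVP}) as a nonhomogeneous abstract Cauchy problem governed by the generator $\overline{P(\partial)}$ of the Feller — hence strongly continuous contraction — semigroup $(T(t))_{t\geq0}$ produced in Theorem \ref{Feller semigroups}, and to produce the solution through the variation-of-parameters (Duhamel) formula, following the Banach-space evolution-equation machinery of \cite{C-H}. The candidate is exactly
\[
u(x,t)=T(t)h(x)+\int_{0}^{t}T(t-s)f(x,s)\,ds,
\]
so the argument splits into three tasks: establishing the regularity demanded in the first line of (\ref{IVP}), checking that this $u$ solves the equation and attains the initial datum, and proving uniqueness.

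First I would record the explicit Fourier-multiplier form of the semigroup, $T(t)g=\mathcal{F}^{-1}(e^{-tp(\xi)}\mathcal{F}g)$ with $p(\xi)=\sum_{j}b_{j}\boldsymbol{\psi}_{j}(||\xi||_{p})$, and note that $0\leq e^{-tp(\xi)}\leq1$ because $p(\xi)\geq0$ (cf. the discussion in the proof of Lemma \ref{Lambda+P}). Since multiplication by $e^{-tp(\xi)}$ does not increase any of the weights $[\max\{1,|\boldsymbol{\psi}(||\xi||_{p})|\}]^{l}$, the operator $T(t)$ maps each Hilbert space $B_{\boldsymbol{\psi},l}(\mathbb{R})$ into itself with norm $\leq1$, hence leaves $B_{\boldsymbol{\psi},\infty}(\mathbb{R})$ invariant; a dominated-convergence estimate in each norm $||\cdot||_{\boldsymbol{\psi},l}$ then gives strong continuity of $t\mapsto T(t)$ on $B_{\boldsymbol{\psi},\infty}(\mathbb{R})$. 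This is the structural fact that reconciles the Banach-space theory of \cite{C-H} with the nuclear Fréchet target $B_{\boldsymbol{\psi},\infty}(\mathbb{R})$: one applies the standard conclusions at each level $l$ and assembles over $l\in\mathbb{N}$.

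\textbf{Existence.} The homogeneous part $T(t)h$ lies in $C([0,T],B_{\boldsymbol{\psi},\infty}(\mathbb{R}))$ by the strong continuity just noted, and, since $h\in B_{\boldsymbol{\psi},\infty}(\mathbb{R})\subseteq Dom(\overline{P(\partial)})$ and $P(\partial)$ maps $B_{\boldsymbol{\psi},\infty}(\mathbb{R})$ continuously into itself by Lemma \ref{operator P(D)}, it also lies in $C^{1}([0,T],C_{0}(\mathbb{Q}_{p}^{n},\mathbb{R}))$ with $\frac{d}{dt}T(t)h=\overline{P(\partial)}T(t)h$. For the Duhamel term I would invoke the two classical regularity criteria for the inhomogeneous problem from \cite{C-H}: under hypothesis (i), $f(x,\cdot)$ takes values in $B_{\boldsymbol{\psi},\infty}(\mathbb{R})\subseteq Dom(\overline{P(\partial)})$ with $P(\partial)f(x,\cdot)$ again $L^{1}$ in $t$ by Lemma \ref{operator P(D)}, the ``values in the domain'' condition; under hypothesis (ii), the time-regularity $f(x,\cdot)\in W^{1,1}((0,T),C_{0}(\mathbb{Q}_{p}^{n},\mathbb{R}))$ is the ``differentiable forcing'' condition. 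In either case the integrand is Bochner integrable and the convolution $\int_{0}^{t}T(t-s)f(x,s)\,ds$ is a strong solution of $w'=\overline{P(\partial)}w+f$, $w(0)=0$, so that $u$ acquires the asserted joint regularity; working level by level in the $B_{\boldsymbol{\psi},l}$-norms upgrades the $C_{0}$-valued statement to the $B_{\boldsymbol{\psi},\infty}$-valued one under (i).

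\textbf{Uniqueness.} This is the cleanest step and I would base it on Lemma \ref{m-dissipative}. If $u_{1},u_{2}$ are two solutions, then $w=u_{1}-u_{2}$ solves $w'=\overline{P(\partial)}w$ with $w(0)=0$; because $\overline{P(\partial)}$ is $m$-dissipative and generates the $C_{0}$-semigroup $(T(t))$, the homogeneous Cauchy problem admits only the trivial solution $w(t)=T(t)w(0)=0$, whence $u_{1}=u_{2}$. I expect the main obstacle to be not the Duhamel formula itself but the bookkeeping required to certify the two distinct regularity classes at once — continuity into the nuclear Fréchet space $B_{\boldsymbol{\psi},\infty}(\mathbb{R})$ together with continuous differentiability into $C_{0}(\mathbb{Q}_{p}^{n},\mathbb{R})$ — which is precisely where the Fourier-multiplier contraction estimate on each $B_{\boldsymbol{\psi},l}(\mathbb{R})$ is needed to transfer the Banach-space conclusions of \cite{C-H} to the projective-limit setting.
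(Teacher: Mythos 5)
Your proposal is correct and takes essentially the same route as the paper: the paper's proof is a one-line appeal to Lemma 4.1.1, Corollary 4.1.2 and Proposition 4.1.6 of \cite{C-H} together with the $m$-dissipativity of $\overline{P(\partial)}$ from Lemma \ref{m-dissipative}, and your matching of hypothesis (i) to the ``values in $Dom(\overline{P(\partial)})$'' criterion and hypothesis (ii) to the $W^{1,1}$ criterion, the Duhamel representation, and uniqueness via the generator property are exactly the content of those citations. The only difference is one of detail: you additionally spell out the multiplier contraction $\left\vert e^{-tp(\xi)}\right\vert \leq 1$ on each $B_{\boldsymbol{\psi},l}(\mathbb{R})$ to certify the $C([0,T],B_{\boldsymbol{\psi},\infty}(\mathbb{R}))$-valued regularity, bookkeeping the paper leaves implicit here and carries out separately in Lemma \ref{Lemma 9}.
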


\begin{proof}
It follows from \cite[Lemma 4.1.1]{C-H}, \cite[Corollary 4.1.2]{C-H} and
\cite[Proposition 4.1.6]{C-H}, by using the fact that $\overline{P(\partial)}$
is $m$-dissipative.
\end{proof}

Consider the following Cauchy problem:%

\begin{equation}
\left\{
\begin{array}
[c]{ll}%
u(x,\cdot)\in C([0,T],B_{\boldsymbol{\psi},\infty}\left(  \mathbb{R}\right)
)\cap C^{1}([0,T],C_{0}(%
\mathbb{Q}
_{p}^{n},\mathbb{R})); & \\
& \\
\frac{\partial u}{\partial t}(x,t)=P(\partial)u(x,t)\text{,} & \\
& \\
u(x,0)=u_{0}(x)\in\mathcal{D}\left(
\mathbb{Q}
_{p}^{n},\mathbb{R}\right)  . &
\end{array}
\right.  \label{Cauchy problem}%
\end{equation}
We define $p(\xi)=\sum_{j=1}^{m}b_{j}D_{\boldsymbol{\psi}_{j}}g$,
\[
u(x,t)=\int_{\mathbf{%
\mathbb{Q}
}_{p}^{n}}\chi_{p}\left(  -x\cdot\xi\right)  e^{-tp(\xi)}\widehat{u_{0}}%
(\xi)d^{n}\xi\text{, for }x\in%
\mathbb{Q}
_{p}^{n}\text{ and }t\geq0\text{,}%
\]
and
\[
Z(x,t)=%
\mathcal{F}%
_{\xi\rightarrow x}^{-1}(e^{-tp(\xi)})\text{ in }\mathcal{D}^{\prime}\left(
\mathbb{Q}
_{p}^{n}\right)  \text{, for }x\in%
\mathbb{Q}
_{p}^{n}\text{ and }t\geq0\text{.}%
\]
Notice that $u(x,t)=%
\mathcal{F}%
_{\xi\rightarrow x}^{-1}(e^{-tp(\xi)})\ast u_{0}(x)$ in $\mathcal{D}^{\prime
}\left(
\mathbb{Q}
_{p}^{n}\right)  $, for $x\in%
\mathbb{Q}
_{p}^{n}$ and $t\geq0$.

\begin{lemma}
\label{Lemma 9}The function $u(x,t)$ defined above satisfies the following conditions:

\noindent(C1) $u(x,\cdot)\in C([0,T],B_{\boldsymbol{\psi},\infty}\left(
\mathbb{R}\right)  )\cap C^{1}([0,T],C_{0}(%
\mathbb{Q}
_{p}^{n}))$ and the derivative is given by
\begin{equation}
\frac{\partial u}{\partial t}(x,t)=-\int_{\mathbf{%
\mathbb{Q}
}_{p}^{n}}\chi_{p}\left(  -x\cdot\xi\right)  p(\xi)e^{-tp(\xi)}\widehat{u_{0}%
}(\xi)d^{n}\xi; \label{Eq19}%
\end{equation}

\noindent(C2) $u(x,\cdot)\in L^{1}(%
\mathbb{Q}
_{p}^{n})\cap L^{2}(%
\mathbb{Q}
_{p}^{n})$ for any $t\geq0$, and
\begin{equation}
P(\partial)u(x,t)=-\int_{\mathbf{%
\mathbb{Q}
}_{p}^{n}}\chi_{p}\left(  -x\cdot\xi\right)  p(\xi)e^{-tp(\xi)}\widehat{u_{0}%
}(\xi)d^{n}\xi. \label{Eq20}%
\end{equation}
Furthermore $u(x,\cdot)$ is a solution of the initial value problem
(\ref{Cauchy problem}).
\end{lemma}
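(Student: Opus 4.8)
The proof rests on the crucial simplification that $u_{0}\in\mathcal{D}(\mathbb{Q}_{p}^{n},\mathbb{R})$ forces $\widehat{u_{0}}\in\mathcal{D}(\mathbb{Q}_{p}^{n})$, so that $\widehat{u_{0}}$ is locally constant with support in some ball $B_{N}^{n}$. Consequently every integral below is effectively taken over the compact set $B_{N}^{n}$, on which the radial continuous functions $p(\xi)$ and $\boldsymbol{\psi}(||\xi||_{p})$ are bounded. Since $e^{-tp(\xi)}\le1$ for $t\ge0$ (recall $p(\xi)\ge0$), the function $\widehat{u}(\xi,t):=e^{-tp(\xi)}\widehat{u_{0}}(\xi)$ is, for each fixed $t$, bounded and compactly supported, whence $[\max\{1,|\boldsymbol{\psi}(||\xi||_{p})|\}]^{l/2}\widehat{u}(\cdot,t)\in L^{2}$ for every $l\in\mathbb{N}$. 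By the characterization in Lemma \ref{Lemma1}-(iii), $u(\cdot,t)\in B_{\boldsymbol{\psi},\infty}(\mathbb{C})$; and the reality of $u(\cdot,t)$ follows exactly as in the Claim of Lemma \ref{Lambda+P}, since $e^{-tp(\xi)}$ is real and even, giving $u(\cdot,t)\in B_{\boldsymbol{\psi},\infty}(\mathbb{R})$.

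First I would establish $u(\cdot,\cdot)\in C([0,T],B_{\boldsymbol{\psi},\infty}(\mathbb{R}))$. Fixing $l\in\mathbb{N}$ and $t_{0}\in[0,T]$, one has
\[
||u(\cdot,t)-u(\cdot,t_{0})||_{\boldsymbol{\psi},l}^{2}=\int_{B_{N}^{n}}[\max\{1,|\boldsymbol{\psi}(||\xi||_{p})|\}]^{l}\,\bigl|e^{-tp(\xi)}-e^{-t_{0}p(\xi)}\bigr|^{2}\,|\widehat{u_{0}}(\xi)|^{2}\,d^{n}\xi,
\]
whose integrand is dominated by $4[\max\{1,|\boldsymbol{\psi}(||\xi||_{p})|\}]^{l}|\widehat{u_{0}}(\xi)|^{2}$, integrable over $B_{N}^{n}$; as $e^{-tp(\xi)}\rightarrow e^{-t_{0}p(\xi)}$ pointwise when $t\rightarrow t_{0}$, dominated convergence yields the claim for each $l$, hence in $B_{\boldsymbol{\psi},\infty}(\mathbb{R})$.

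Next I would prove the remaining part of (C1), namely that $u(\cdot,\cdot)\in C^{1}([0,T],C_{0}(\mathbb{Q}_{p}^{n}))$ with derivative (\ref{Eq19}). The candidate derivative $w(x,t):=-\int\chi_{p}(-x\cdot\xi)p(\xi)e^{-tp(\xi)}\widehat{u_{0}}(\xi)\,d^{n}\xi$ is the inverse Fourier transform of a continuous compactly supported function, hence lies in $C_{0}(\mathbb{Q}_{p}^{n})$. Using the elementary bound $||\mathcal{F}_{\xi\rightarrow x}^{-1}g||_{L^{\infty}}\le||g||_{L^{1}}$ (cf. (\ref{Key_inequality})), I would estimate
\[
\left\|\frac{u(\cdot,t+h)-u(\cdot,t)}{h}-w(\cdot,t)\right\|_{L^{\infty}}\le\int_{B_{N}^{n}}\left|\frac{e^{-(t+h)p(\xi)}-e^{-tp(\xi)}}{h}+p(\xi)e^{-tp(\xi)}\right|\,|\widehat{u_{0}}(\xi)|\,d^{n}\xi.
\]
Since $p(\xi)$ is bounded on $B_{N}^{n}$, a Taylor estimate for $s\mapsto e^{-sp(\xi)}$ shows the bracketed term tends to $0$ uniformly on $B_{N}^{n}$ as $h\rightarrow0$, so the right-hand side vanishes; continuity of $t\mapsto w(\cdot,t)$ in $C_{0}$ follows from the same $L^{1}$-dominated-convergence argument applied to $-p(\xi)e^{-tp(\xi)}\widehat{u_{0}}(\xi)$. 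This proves (C1).

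Finally, (C2) is almost immediate. By Lemma \ref{Lemma1}-(iii),(vii), $u(\cdot,t)\in B_{\boldsymbol{\psi},\infty}(\mathbb{R})\subset L^{1}\cap L^{2}$. Since $\widehat{u}(\cdot,t)=e^{-tp(\cdot)}\widehat{u_{0}}$, the definition of $P(\partial)$ through the Fourier transform, together with the fact (Lemma \ref{operator P(D)}) that $P(\partial)$ is well defined on $B_{\boldsymbol{\psi},\infty}(\mathbb{R})$, gives
\[
P(\partial)u(\cdot,t)=-\mathcal{F}_{\xi\rightarrow x}^{-1}\bigl(p(\xi)e^{-tp(\xi)}\widehat{u_{0}}(\xi)\bigr),
\]
which is exactly (\ref{Eq20}). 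Comparing (\ref{Eq19}) and (\ref{Eq20}) yields $\frac{\partial u}{\partial t}=P(\partial)u$, while Fourier inversion gives $u(x,0)=\mathcal{F}_{\xi\rightarrow x}^{-1}(\widehat{u_{0}})=u_{0}(x)$; with (C1) this shows $u$ solves (\ref{Cauchy problem}). The main obstacle is the differentiation-under-the-integral-sign argument for (C1) in the $C_{0}$-norm; everything hinges on the compactness of the support of $\widehat{u_{0}}$, which bounds $p$ and reduces all estimates to dominated convergence over a fixed ball.
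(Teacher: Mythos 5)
Your proof is correct and follows essentially the same route as the paper's: the Fourier representation $\widehat{u}(\xi,t)=e^{-tp(\xi)}\widehat{u_{0}}(\xi)$ together with $e^{-tp(\xi)}\le 1$ gives membership and continuity in $B_{\boldsymbol{\psi},\infty}(\mathbb{R})$ by dominated convergence, differentiation under the integral in the $L^{\infty}$-norm yields (C1), and the compact support of $\widehat{u_{0}}$ gives the integrability needed for (C2) and the multiplier identity. Your only deviation is cosmetic: where the paper invokes the mean value theorem with an intermediate point $\tau$ and argues somewhat sketchily that the resulting integral vanishes, you use a Taylor estimate made uniform by the boundedness of $p(\xi)$ on the support of $\widehat{u_{0}}$, which actually makes the paper's limiting step fully rigorous.
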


\begin{proof}
The result is proved through the following claims:

\textbf{Claim 1.} $u(x,\cdot)\in C([0,T],B_{\boldsymbol{\psi},\infty}\left(
\mathbb{R}\right)  )$

We first show that $u(\cdot,t)\in B_{\boldsymbol{\psi},\infty}\left(
\mathbb{R}\right)  $ for all $t\geq0$. By using that $u(x,t)=%
\mathcal{F}%
_{\xi\rightarrow x}^{-1}(e^{-tp(\xi)})\ast u_{0}(x)$ in $\mathcal{D}^{\prime
}\left(
\mathbb{Q}
_{p}^{n}\right)  $. This convolution exists because $u_{0}(x)$ has compact
support, then
\begin{equation}
\widehat{u}(\xi,t)=e^{-tp(\xi)}\widehat{u_{0}}(\xi)\text{ in }\mathcal{D}%
^{\prime}\left(
\mathbb{Q}
_{p}^{n}\right)  \label{Eq21}%
\end{equation}
for $t\geq0$. Now%
\begin{align*}
||u(\cdot,t)||_{\boldsymbol{\psi},l}^{2}  &  =\int_{\mathbf{%
\mathbb{Q}
}_{p}^{n}}[\max(1,\boldsymbol{\psi}(||\xi||_{p}))]^{l}|\widehat{u}(\xi
,t)|^{2}d^{n}\xi\\
&  \leq\int_{\mathbf{%
\mathbb{Q}
}_{p}^{n}}[\max(1,\boldsymbol{\psi}(||\xi||_{p}))]^{l}|\widehat{u_{0}}%
(\xi)|^{2}d^{n}\xi=||u_{0}||_{\boldsymbol{\psi},l}^{2},
\end{align*}
i.e. $||u(\cdot,t)||_{\boldsymbol{\psi},l}\leq||u_{0}||_{\boldsymbol{\psi},l}%
$, for any $l\in\mathbb{N}$, which implies that $u(\cdot,t)\in
B_{\boldsymbol{\psi},\infty}\left(  \mathbb{R}\right)  $ for all $t\geq0$.

We now verify that
\[
\lim_{t\rightarrow t_{0}}||u(x,t)-u(x,t_{0})||_{\boldsymbol{\psi},l}%
^{2}=0\text{ for any }l\in\mathbb{N}\text{,}%
\]
which implies the continuity of $u(\cdot,t)$. The verification of this fact is
done by using (\ref{Eq21}) and the dominated convergence theorem.

\textbf{Claim 2. }$u(x,\cdot)\in C^{1}([0,T],C_{0}(%
\mathbb{Q}
_{p}^{n}))$ and $\frac{\partial u}{\partial t}(x,t)$ is given by (\ref{Eq19}).

Set
\[
h_{t}(x):=\int_{\mathbf{%
\mathbb{Q}
}_{p}^{n}}\chi_{p}\left(  -x\cdot\xi\right)  p(\xi)e^{-tp(\xi)}\widehat{u_{0}%
}(\xi)d^{n}\xi\text{, for }x\in\mathbf{%
\mathbb{Q}
}_{p}^{n}\text{ and }t\geq0.
\]
Notice that for$\ $any $t\geq0$ fixed, and any $x\in\mathbf{%
\mathbb{Q}
}_{p}^{n}$ fixed, $p(\xi)e^{-tp(\xi)}\widehat{u_{0}}(\xi)$ is an integrable
function in $\xi$, and thus by the Riemann-Lebesgue theorem $h_{t}(x)\in
C_{0}(%
\mathbb{Q}
_{p}^{n})$ for$\ $any $t\geq0$ fixed. Now, by applying the mean value theorem
we have
\[
\frac{e^{-tp(\xi)}-e^{-t_{0}p(\xi)}}{t-t_{0}}=-p(\xi)e^{-\tau(x)p(\xi
)}\text{,}%
\]
for some $\tau(x)$ between $t$ and $t_{0}$. So that
\begin{multline*}
\lim_{t\rightarrow t_{0}}\left\Vert \frac{u(x,t)-u(x,t_{0})}{t-t_{0}}%
+h_{t}(x)\right\Vert _{L^{\infty}}\\
=\lim_{t\rightarrow t_{0}}\left\Vert \frac{\int_{\mathbf{%
\mathbb{Q}
}_{p}^{n}}\chi_{p}\left(  -x\cdot\xi\right)  \left\{  e^{-tp(\xi)}%
-e^{-t_{0}p(\xi)}\right\}  \widehat{u_{0}}(\xi)d^{n}\xi}{t-t_{0}}%
+h_{t}(x)\right\Vert _{L^{\infty}}\\
=\lim_{t\rightarrow t_{0}}\left\Vert -\int_{\mathbf{%
\mathbb{Q}
}_{p}^{n}}\chi_{p}\left(  -x\cdot\xi\right)  p(\xi)e^{-\tau(x)p(\xi)}%
\widehat{u_{0}}(\xi)d^{n}\xi+h_{t}(x)\right\Vert _{L^{\infty}}\\
=\lim_{t\rightarrow t_{0}}\left\Vert -\int_{\mathbf{%
\mathbb{Q}
}_{p}^{n}}\chi_{p}\left(  -x\cdot\xi\right)  p(\xi)\widehat{u_{0}}(\xi)\left[
e^{-\tau(x)p(\xi)}-e^{-tp(\xi)}\right]  d^{n}\xi\right\Vert _{L^{\infty}}%
\end{multline*}
Now, when $t\rightarrow t_{0},$ $\tau(x)\rightarrow t,$ so that
\[
\lim_{t\rightarrow t_{0}}\left\Vert \frac{u(x,t)-u(x,t_{0})}{t-t_{0}}%
+h_{t}(x)\right\Vert _{L^{\infty}}=0.
\]

\textbf{Claim 3. }The assertion $(C_{2})$ holds.

Indeed, since $u_{0}$ has compact support
\[
e^{-tp(\xi)}\widehat{u_{0}},\text{ }p(\xi)e^{-tp(\xi)}\widehat{u_{0}}(\xi)\in
L^{1}(%
\mathbb{Q}
_{p}^{n})\cap L^{2}(%
\mathbb{Q}
_{p}^{n}),
\]
and thus $P(\partial)u(x,t)=-%
\mathcal{F}%
_{\xi\rightarrow x}^{-1}(p(\xi)%
\mathcal{F}%
_{x\rightarrow\xi}u(x,t)).$
\end{proof}

\begin{theorem}
\label{Theorem 3}Assuming that operator $P(\partial)$ satisfies the positive
maximum principle. Then the Cauchy problem \
\begin{equation}
\left\{
\begin{array}
[c]{l}%
u(x,\cdot)\in C([0,T],B_{\boldsymbol{\psi},\infty}\left(  \mathbb{R}\right)
)\cap C^{1}([0,T],C_{0}(%
\mathbb{Q}
_{p}^{n},\mathbb{R}));\\
\\
\frac{\partial u}{\partial t}(x,t)=P(\partial)u(x,t)+f(x,t)\\
\\
u(x,0)=u_{0}(x)\in B_{\boldsymbol{\psi},\infty}\left(  \mathbb{R}\right)  ,
\end{array}
\right.  \label{IVP2}%
\end{equation}
where $f(x,t)$ is a function satisfying the assumptions of Lemma
\ref{Lemma 8}, has a unique solution given by
\[
u(x,t)=Z_{t}(x)\ast u_{0}(x)+%
{\displaystyle\int\nolimits_{0}^{t}}
Z_{t-s}(x)f(x,s)ds
\]
for all $t\in\lbrack0,T]$. Furthermore,
\[%
\begin{array}
[c]{ccc}%
C_{0}(%
\mathbb{Q}
_{p}^{n},\mathbb{R}) & \longrightarrow & C_{0}(%
\mathbb{Q}
_{p}^{n},\mathbb{R})\\
&  & \\
h & \longmapsto & Z_{t}(x)\ast h(x),
\end{array}
\]
for $t\geq0,$ gives rise to a Feller semigroup.
\end{theorem}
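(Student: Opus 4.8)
The plan is to deduce the theorem from the machinery already in place, the crux being to identify the abstract Feller semigroup generated by $\overline{P(\partial)}$ with the explicit convolution family $h\mapsto Z_t(x)\ast h(x)$.

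First I would invoke Theorem \ref{Feller semigroups}: since $P(\partial)$ satisfies the positive maximum principle, the closure $\overline{P(\partial)}$ on $C_0(\mathbb{Q}_p^n,\mathbb{R})$ generates a strongly continuous positive contraction (Feller) semigroup $(T(t))_{t\geq 0}$. By Lemma \ref{m-dissipative}, $\overline{P(\partial)}$ is $m$-dissipative, so standard semigroup theory guarantees that the abstract homogeneous problem $u'(t)=\overline{P(\partial)}u(t)$, $u(0)=u_0$, has the unique solution $t\mapsto T(t)u_0$ for each $u_0\in Dom(\overline{P(\partial)})$.

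Next I would carry out the identification. For $u_0\in\mathcal{D}_{\mathbb{R}}(\mathbb{Q}_p^n)$, Lemma \ref{Lemma 9} shows that $v(x,t):=Z_t(x)\ast u_0(x)$ lies in $C([0,T],B_{\boldsymbol{\psi},\infty}(\mathbb{R}))\cap C^1([0,T],C_0(\mathbb{Q}_p^n,\mathbb{R}))$ and satisfies $\partial_t v=P(\partial)v$ with $v(\cdot,0)=u_0$. Since $v(\cdot,t)\in B_{\boldsymbol{\psi},\infty}(\mathbb{R})\subset Dom(\overline{P(\partial)})$ and $\overline{P(\partial)}$ restricts to $P(\partial)$ there, the function $v$ solves the same abstract Cauchy problem; by uniqueness $Z_t\ast u_0=T(t)u_0$ on $\mathcal{D}_{\mathbb{R}}$. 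As $\mathcal{D}_{\mathbb{R}}$ is dense in $C_0(\mathbb{Q}_p^n,\mathbb{R})$ and both $h\mapsto Z_t\ast h$ and $T(t)$ are contractions on $C_0$, the identity $Z_t\ast h=T(t)h$ extends to all $h\in C_0(\mathbb{Q}_p^n,\mathbb{R})$.

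Having made this identification, the theorem follows immediately. By Lemma \ref{Lemma 8}, under the stated hypotheses on $f$, the inhomogeneous problem (\ref{IVP2}) admits the unique solution $u(x,t)=T(t)u_0+\int_0^t T(t-s)f(\cdot,s)\,ds$; replacing $T(t)$ by $Z_t\ast\cdot$ yields the claimed formula $u(x,t)=Z_t(x)\ast u_0(x)+\int_0^t Z_{t-s}(x)f(x,s)\,ds$. The final assertion, that $h\mapsto Z_t\ast h$ is a Feller semigroup, is then just a restatement of the Feller property of $(T(t))_{t\geq 0}$ from Theorem \ref{Feller semigroups}. The main obstacle I expect is the identification step itself: one must verify carefully that the explicit convolution solution lies in the domain of the generator and solves the abstract equation, so that the uniqueness theorem for $m$-dissipative generators applies, and that the $C_0$-contractivity of $Z_t\ast\cdot$ (i.e.\ the Feller contraction property) legitimizes the density extension from $\mathcal{D}_{\mathbb{R}}$ to all of $C_0$.
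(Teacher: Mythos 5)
Your proposal is correct and takes essentially the same route as the paper: both use Lemma \ref{Lemma 9} to exhibit $Z_t\ast u_0$ as a solution of the homogeneous problem for $u_0\in\mathcal{D}_{\mathbb{R}}(\mathbb{Q}_p^n)$, invoke the uniqueness coming from $m$-dissipativity (Lemmas \ref{m-dissipative} and \ref{Lemma 8}) to identify $Z_t\ast u_0=T(t)u_0$ there, extend the identity to all of $C_0(\mathbb{Q}_p^n,\mathbb{R})$ by density and boundedness of $h\mapsto Z_t\ast h$, and then read off the inhomogeneous formula and the Feller property from Lemma \ref{Lemma 8} and Theorem \ref{Feller semigroups}. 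The only cosmetic difference is that you justify the density extension via contractivity of $Z_t\ast\cdot$, whereas the paper uses the finite total variation bound $\left\Vert Z_t\right\Vert_M$, which serves the same purpose.
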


\begin{proof}
Set
\[
(\digamma(t)u)(x)=Z_{t}(x)\ast u(x)\text{, for }t\geq0\text{, }u\in
\mathcal{D}_{%
\mathbb{R}
}\left(
\mathbb{Q}
_{p}^{n}\right)  .
\]
By using Lemmas \ref{Lemma 8}, \ref{Lemma 9},
\[
T(t)\mid_{\mathcal{D}_{%
\mathbb{R}
}\left(
\mathbb{Q}
_{p}^{n}\right)  }=\digamma(t)\mid_{\mathcal{D}_{%
\mathbb{R}
}\left(
\mathbb{Q}
_{p}^{n}\right)  }\text{for }t\geq0\text{.}%
\]
Now, since
\[
\left\Vert \digamma(t)u\right\Vert _{L^{\infty}}=||Z_{t}\ast u||_{L^{\infty}%
}\leq||Z_{t}||_{M}||u||_{L^{\infty}},
\]
where $||Z_{t}||_{M}$ denotes the total variation of the finite Borel measure
$Z_{t}$, and since $\mathcal{D}_{%
\mathbb{R}
}\left(
\mathbb{Q}
_{p}^{n}\right)  $ is dense in $C_{0}(%
\mathbb{Q}
_{p}^{n},\mathbb{R})$, we conclude $T(t)=\digamma(t)$ for $t\geq0$. Finally,
by Theorem \ref{Feller semigroups}, $\digamma(t)$ gives rise to a Feller semigroup.
\end{proof}

\begin{remark}
\label{Levy process}With the hypotheses of Theorem \ref{Feller semigroups} and
assuming that $p(0)=0$, we obtain the existence of a L\'{e}vy process
$(X_{t})_{t\geq0}$ with state space $\mathbf{%
\mathbb{Q}
}_{p}^{n}$, such that $Z_{t}(x)=P_{X_{t}-X_{0}}(x),$ where $P_{X_{t}-X_{0}}$
denotes the distribution of the random variable $X_{t}-X_{0}.$ This result
follows from \cite[Section 2]{Evans}, since $Z_{t}$ is a convolution semigroup
such that $Z_{t}\rightarrow\delta_{0}$ weakly as $t\rightarrow0^{+}$.
\end{remark}

\bigskip

\end{document}